\documentclass{article}

\usepackage{arxiv}

\usepackage[utf8]{inputenc} 
\usepackage[T1]{fontenc}    
\usepackage{hyperref}       
\usepackage{url}            
\usepackage{booktabs}       
\usepackage{amsfonts}       
\usepackage{nicefrac}       
\usepackage{microtype}      
\usepackage{lipsum}
\usepackage{graphicx}
\usepackage{amsmath,amssymb,amsthm,mathtools}
\usepackage{enumitem}
\usepackage{subcaption}
\usepackage{xcolor}
\newtheorem{theorem}{Theorem}[section]
\newtheorem{lemma}[theorem]{Lemma}
\newtheorem{proposition}[theorem]{Proposition}
\newtheorem{corollary}[theorem]{Corollary}
\newtheorem{definition}[theorem]{Definition}
\newtheorem{remark}[theorem]{Remark}
\newtheorem{example}[theorem]{Example}
\usepackage[numbers]{natbib}

\graphicspath{ {./images/} }

\title{Characterizations of bipartite and Eulerian partial duals of orientable hypermaps}

\author{
 Yufan Han \\
  College of Mathematical Sciences\\
  Xinjiang Normal University\\
  Urumqi 830017, China \\
  \texttt{} \\
   \And
 Metrose Metsidik \\
  College of Mathematical Sciences\\
  Xinjiang Normal University\\
  Urumqi 830017, China \\
  \texttt{metrose@xjnu.edu.cn} \\
}

\begin{document}
\maketitle
\begin{abstract}
For ribbon graphs, Deng et al. established two classical characterizations: a partial dual is bipartite if and only if the dualizing set consists of all $c$-edges induced by some all-crossing direction of the medial graph, and a partial dual is Eulerian if and only if the dualizing set consists of all $d$-edges induced by some crossing-total direction, possibly together with some $t$-edges. In this paper we extend both results to hypermaps in the $(\sigma,\alpha)$ model, restricting attention to partial duality with respect to subsets of hyperedges.

We first rewrite the Chmutov and Vignes-Tourneret's three-permutation formula as an explicit hyperedge-partial-duality formula in the two-permutation model, and show that in this model partial duality acts exactly by preserving the support and length of every hyperedge while reversing the $\alpha$-cycles corresponding to the selected hyperedges. Next, using the Cori and Hetyei's construction of the medial map, we define for each hyperedge subset $E'\subseteq E(H)$ a black/white smoothing state $S_{E'}$, and prove rigorously that the state circles of $S_{E'}$ are in bijection with the vertices of the partial dual $H^{E'}$. Consequently, $H^{E'}$ is Eulerian if and only if every state circle has even length.

On this basis we prove the following two main theorems:
\[
\begin{aligned}
H^{E'}\text{ is Eulerian}
&\Longleftrightarrow
\exists\text{ a crossing-total direction $\Omega$ of }M(H) \\
&\hspace{3.35em}\text{such that } E'=D(\Omega)\cup T',\quad T'\subseteq T(\Omega),
\end{aligned}
\]
\[
\begin{aligned}
H^{E'}\text{ is bipartite}
&\Longleftrightarrow
\exists\text{ an all-crossing direction $\Phi$ of }M(H) \\
&\hspace{3.35em}\text{such that } E'=C(\Phi).
\end{aligned}
\]
Here $D(\Omega)$, $T(\Omega)$ and $C(\Phi)$ denote, respectively, the sets of all $d$-type, $t$-type and $c$-type hyperedges. Unlike the ribbon-graph case, the hypermap setting exhibits a genuine new obstruction: if some hyperedge-partial dual is bipartite, then every hyperedge of the original hypermap must have even length.
\end{abstract}

\noindent\textbf{Keywords.} hypermap; partial duality; medial map; state circle; bipartite; Eulerian

\section{Introduction}

Let $G$ be a ribbon graph and let $A\subseteq E(G)$. In the planar setting, Huggett and Moffatt first extended the classical correspondence between Eulerian graphs and bipartite geometric duals to partial duality, and obtained a medial-graph characterization of bipartite partial duals of plane graphs. Metsidik and Jin then characterized Eulerian partial duals of plane graphs. Deng, Jin and Metsidik subsequently generalized both results to arbitrary ribbon graphs and expressed them uniformly in terms of all-crossing and crossing-total directions of medial graphs \cite{HuggettMoffatt2013,MetsidikJin2018,DengJinMetsidik2020}. These results show that global properties of partial duals can be completely controlled by local orientation data on the medial graph.

A hypermap can be viewed as a hypergraph embedded in a surface; when every hyperedge has length $2$, it reduces to a ribbon graph. Classical combinatorial models for hypermaps go back to Cori, Walsh, Jones, and Singerman, Lando, and Zvonkin \cite{Cori1975,Walsh1975,JonesSingerman1978,LandoZvonkin2004}. Chmutov and Vignes-Tourneret extended partial duality to general hypermaps and gave an explicit formula in the orientable three-permutation model \cite{ChmutovVignes2022}; Cori and Hetyei, in their study of the Whitney polynomial of a hypermap, systematically employed a permutation-based construction of the medial map \cite{CoriHetyei2025}. These works provide a natural technical foundation for the present paper.

We restrict throughout to orientable hypermaps and consider only partial duality with respect to subsets of hyperedges. In this setting, the orientation of the underlying surface yields a canonical checkerboard coloring of the medial map, so the $x/y$-labeling according to whether the black face lies on the left or on the right of an oriented edge is well defined. Our main contributions can be summarized as follows:
\begin{enumerate}
  \item We give a rigorous and unambiguous formula for hyperedge partial duality in the two-permutation $(\sigma,\alpha)$ model.
  \item We provide a complete proof of the correspondence ``state circles $\leftrightarrow$ vertices of a partial dual,'' which is the common core of both main theorems.
  \item We extend the crossing-total and all-crossing characterizations from ribbon graphs to the hypermap setting, where medial vertices may have arbitrary valence.
  \item We isolate a hypermap-specific even-length obstruction: if some hyperedge-partial dual is bipartite, then every hyperedge of the original hypermap must have even length.
\end{enumerate}

We outline the structure of this paper in detail. In Section 2, we start with fundamental notations of orientable hypermaps under the two-permutation model, transform the existing three-permutation partial duality formula into a new form adapted for hyperedges, and discuss elementary properties of Eulerian and bipartite hypermaps. Section 3 is devoted to the permutation definition of medial maps, their checkerboard colorings, and the construction of states and state circles from smoothing operations. The key bijection linking state circles to vertices of partial duals is rigorously proved in Section 4. Based on this correspondence, Section 5 proves the characterization theorem for Eulerian partial duals with respect to crossing-total directions, while Section 6 addresses bipartite partial duals and all-crossing directions. Section 7 demonstrates all theoretical conclusions through a concrete hypermap example, and compares our results with the known results on ribbon graphs. We also explain why non-orientable hypermaps are not considered in this work. The appendix contains additional figures visualizing Eulerian and bipartite partial duals of the example hypermap.

\section{Orientable hypermaps and hyperedge partial duality}

\subsection{The two-permutation \texorpdfstring{$(\sigma,\alpha)$}{(sigma, alpha)} model}

Throughout the paper, permutations are composed from right to left. We adopt the permutation model used by Cori and Hetyei in \cite[\S 1.1]{CoriHetyei2025}. Let $D=[n]=\{1,2,\dots,n\}$ be a finite set of darts. A connected orientable hypermap is a pair of permutations
\[
H=(\sigma,\alpha)\in S_n\times S_n,
\]
acting transitively on $D$. It encodes a hypergraph embedded in an orientable surface without boundary: the cycles of $\sigma$ correspond to vertices, the cycles of $\alpha$ correspond to hyperedges, and the faces correspond to the cycles of the permutation
\[
\phi:=\alpha^{-1}\sigma.
\]
The degree of a vertex or a hyperedge is defined as the length of the corresponding cycle. For $i\in D$, let $v(i)$ denote the $\sigma$-cycle containing $i$, and let $e(i)$ denote the $\alpha$-cycle containing $i$.

\begin{definition}\label{def:ve-vb}
Let $H=(\sigma,\alpha)$ be an orientable hypermap.
\begin{enumerate}
  \item Following Euler's classical terminology for graphs of even degree \cite{Euler1736}, we say that $H$ is \emph{Eulerian} if every cycle of $\sigma$ has even length.
  \item A hypermap is \emph{bipartite} if its vertices admit a 2-coloring such that adjacent vertices receive distinct colors. Equivalently, \(H\) is \emph{bipartite} iff its vertex-adjacency graph \(\Gamma(H)\)~\cite{BredaDuarte2007} is bipartite, where \(\Gamma(H)\) is the graph whose vertices are the vertices of \(H\), and for each dart \(i\in D\), an edge is added between the vertices \(v(i)\) and \(v(\alpha(i))\); loops and multiple edges are allowed.
\end{enumerate}
\end{definition}

For later reference, consider the orientable hypermap $H_0=(\sigma,\alpha)$ given by
\[
\sigma=(1\,8)(2\,10\,11)(3\,6)(4\,5\,7)(9\,12),
\qquad
\alpha=(1\,2\,3\,4)(5\,6\,7\,8)(9\,10)(11\,12).
\]
Write the four hyperedges as
\[
A:=(1,2,3,4),\qquad B:=(5,6,7,8),\qquad C:=(9,10),\qquad D:=(11,12).
\]

\begin{figure}[htbp]
\centering
\includegraphics[width=0.88\linewidth]{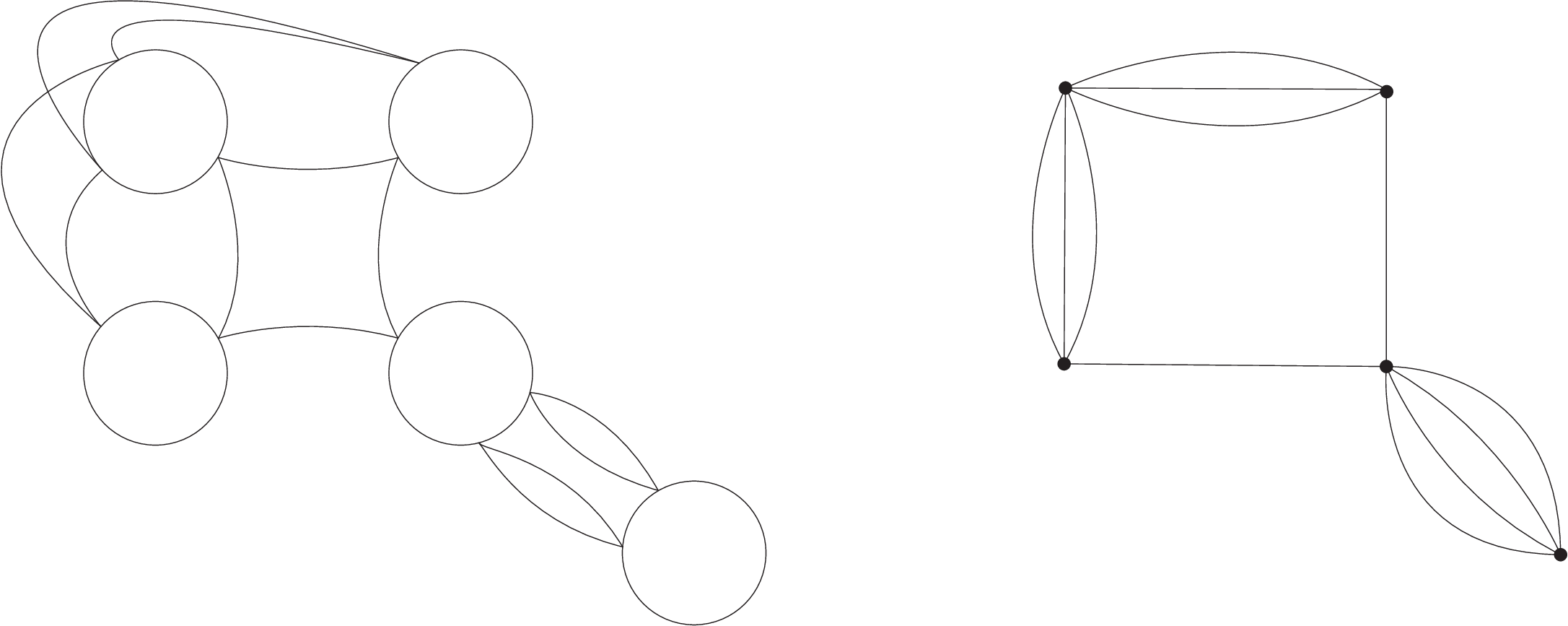}
\put(-304,123){\footnotesize1}
\put(-300,138){\footnotesize8}
\put(-362,120){\footnotesize4}
\put(-379,141){\footnotesize5}
\put(-382,118){\footnotesize7}
\put(-304,70){\footnotesize2}
\put(-284,59){\footnotesize11}
\put(-293,50){\footnotesize10}
\put(-361,70){\footnotesize3}
\put(-385,72){\footnotesize6}
\put(-234,26){\footnotesize12}
\put(-239,17){\footnotesize9}
\put(-161,150){\footnotesize$(4,5,7)$}
\put(-57,150){\footnotesize$(1,8)$}
\put(-157,60){\footnotesize$(3,6)$}
\put(-43,71){\footnotesize$(2,10,11)$}
\put(-10,7){\footnotesize$(9,12)$}
\put(-324.5,-10){$H_0$}
\put(-100,-10){$\Gamma(H_0)$}
\caption{A hypermap $H_0$ and its vertex-adjacency graph $\Gamma(H_0)$.}
\label{fig:hypermap-H0}
\end{figure}

We will use the hypermap $H_0$ as a running example throughout the subsequent sections.

\begin{proposition}\label{prop:adj-bip}
The hypermap $H$ is bipartite if and only if there exists a 2-coloring
\[
\chi:V(H)\to\{1,2\}
\]
such that for every dart $i\in D$ one has
\[
\chi(v(i))\neq \chi(v(\alpha(i))).
\]
Equivalently, if $e=(i_1i_2\cdots i_k)$ is a hyperedge, then the vertex colors alternate along the cyclic order of $e$:
\[
\chi(v(i_j))\neq \chi(v(i_{j+1}))
\qquad (1\le j\le k,\ i_{k+1}:=i_1).
\]
\end{proposition}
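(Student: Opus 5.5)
The plan is to unwind Definition~\ref{def:ve-vb}(2) directly; no earlier result beyond the definition of $\Gamma(H)$ is needed.

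\emph{First equivalence.} By definition, $H$ is bipartite iff $\Gamma(H)$ admits a proper 2-coloring of its vertex set $V(\Gamma(H))=V(H)$. I would use that a map $\chi:V(H)\to\{1,2\}$ is a proper coloring of a multigraph exactly when the two endpoints of every edge get different colors. The edges of $\Gamma(H)$ are indexed by the darts $i\in D$, and the edge indexed by $i$ joins $v(i)$ and $v(\alpha(i))$. So $\chi$ is proper iff $\chi(v(i))\neq\chi(v(\alpha(i)))$ for all $i\in D$, which is the first claimed condition.

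\emph{Loops.} A loop occurs when $v(i)=v(\alpha(i))$, and then the condition can never hold. This matches the convention that a graph with a loop is not bipartite, so no separate case is needed. I should remark on this explicitly.

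\emph{Reformulation along hyperedges.} Since $\alpha$ is a permutation, $D$ is partitioned into the cycles of $\alpha$. For a cycle $e=(i_1 i_2\cdots i_k)$ we have $\alpha(i_j)=i_{j+1}$, with indices read mod $k$. Hence the set of pairs $\{(i,\alpha(i)) : i\in D\}$ is the disjoint union over hyperedges $e$ of the pairs $\{(i_j,i_{j+1}) : 1\le j\le k\}$. The dartwise condition is therefore equivalent to the alternation condition holding on every hyperedge.

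\emph{Degree-one hyperedges.} When $k=1$, we get $\alpha(i_1)=i_1$ and the condition fails. This is consistent with the loop remark above.

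\emph{Possible obstacle.} The main care point is interpretive rather than technical: the conventions for loops and multiple edges in $\Gamma(H)$. Multiple edges impose the same constraint repeatedly and are harmless; loops force non-bipartiteness on both sides of the equivalence. Otherwise the proof is a routine unwinding of definitions.
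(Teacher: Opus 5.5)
Your proposal is correct and follows the paper's own proof. Like the paper, you identify the edges of $\Gamma(H)$ with darts joining $v(i)$ to $v(\alpha(i))$, read properness of $\chi$ as the dartwise condition, use $\alpha(i_j)=i_{j+1}$ along each $\alpha$-cycle to obtain the alternation form, and note that loops force non-bipartiteness; your remarks on multiple edges and length-one hyperedges are consistent with this and slightly more explicit.
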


\begin{proof}
By definition, each edge of $\Gamma(H)$ is induced by some dart $i$ and joins $v(i)$ to $v(\alpha(i))$. Therefore $\Gamma(H)$ is 2-colorable if and only if there exists such a map $\chi$ making the endpoints of every such edge differently colored. If $e=(i_1\cdots i_k)$, then $\alpha(i_j)=i_{j+1}$, so the condition is exactly the alternation of colors along the cyclic order of the hyperedge. The converse is immediate. In particular, if there exists some $i$ with $v(i)=v(\alpha(i))$, then $\Gamma(H)$ contains a loop and $H$ cannot be bipartite.
\end{proof}

\subsection{A two-permutation formula for hyperedge partial duality}

The notion of partial duality follows Chmutov and Vignes-Tourneret \cite[Definition~2.1]{ChmutovVignes2022}. More generally, if $S$ is a set of cells of the same type (vertices, hyperedges, or faces) in a hypermap, then one can define the partial dual $H^S$ with respect to $S$; when $S$ is the set of all cells of that type, one recovers the total dual. In this paper we need only the case of a subset $E'\subseteq E(H)$ of hyperedges. Geometrically, one may regard $H^{E'}$ as obtained in four steps: first take the surface with boundary formed by all vertex-cells together with the selected hyperedge-cells; then cap each boundary component of this surface by a new vertex-cell; next take a copy of each hyperedge of the original hypermap and glue these copies to the new vertices along the boundary intervals they share with the intermediate surface; finally cap the remaining boundary components by face-cells. By \cite[Lemma~2.2 and Lemma~2.3(b)]{ChmutovVignes2022}, this construction is independent of the auxiliary choice of cell type and preserves the natural correspondence between all hyperedges together with their lengths.

In the following, we give an explicit formula in the two-permutation model. We first recall Chmutov and Vignes–Tourneret's characterization of orientable hypermaps in the three-permutation model. Let $E'\subseteq E(H)$ be a hyperedge subset, and write $\overline{E'}:=E(H)\setminus E'$. 

\begin{theorem}[Chmutov and Vignes-Tourneret {\cite{ChmutovVignes2022}}]
\label{thm:CVT-partial-duality}
Let $H=(\delta_V,\delta_E,\delta_F)$ be an orientable hypermap in the three-permutation model, where $\delta_F\delta_E\delta_V=1$. Then the partial dual $H^{E'}$
is given as follows:
\[
        \bigl(
            \delta_{E'}\delta_V,\,
            \delta_{\overline{E'}}\delta_{E'}^{-1},\,
            \delta_F\delta_{E'}
        \bigr).
\]
\end{theorem}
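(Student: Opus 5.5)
The plan is to verify the formula in two layers. First I check the algebra: the triple is a legitimate orientable hypermap with the correct hyperedge cycles. Then I check the geometry: its cycles are exactly the cells produced by the four-step construction of $H^{E'}$ described above.

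\textbf{Step 1: algebraic consistency.} Write $\delta_E=\delta_{E'}\delta_{\overline{E'}}$, a product of disjoint cycles, so that $\delta_{E'}$ and $\delta_{\overline{E'}}$ commute. Then
\[
(\delta_F\delta_{E'})(\delta_{\overline{E'}}\delta_{E'}^{-1})(\delta_{E'}\delta_V)=\delta_F\delta_{E'}\delta_{\overline{E'}}\delta_V=\delta_F\delta_E\delta_V=1 .
\]
The group generated by the three new permutations contains $\delta_{E'}\delta_V$ and $\delta_{\overline{E'}}\delta_{E'}^{-1}$. I expect transitivity to follow from this, but the more robust argument is geometric: the construction glues connected pieces along shared boundary intervals, so the resulting surface is connected. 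I would rely on the geometric argument.

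The hyperedge permutation $\delta_{\overline{E'}}\delta_{E'}^{-1}$ has the same cycle supports and lengths as $\delta_E$, with exactly the cycles in $E'$ reversed. This matches \cite[Lemma~2.3(b)]{ChmutovVignes2022}: every hyperedge and its length are preserved.

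\textbf{Step 2: geometric identification.} Model each dart as the corner where a vertex-cell meets a hyperedge-cell. Form the surface $\Sigma$ consisting of all vertex-cells together with the hyperedge-cells of $E'$.

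Traverse a boundary component of $\Sigma$ in the orientation induced from the surface. The traversal moves to the next dart around the current vertex by $\delta_V$. If that dart lies in a selected hyperedge, it then moves across that hyperedge by $\delta_{E'}$. If it lies in an unselected hyperedge, $\delta_{E'}$ fixes it and it stays at the boundary interval that the unselected hyperedge will later attach to. Hence the boundary components, which are the new vertices, are the cycles of $\delta_{E'}\delta_V$.

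The copied hyperedges are glued to these new vertex discs from the opposite side of each selected hyperedge-cell. The induced cyclic order is therefore reversed for $e\in E'$ and unchanged for $e\notin E'$, which gives $\delta_{\overline{E'}}\delta_{E'}^{-1}$. The faces are forced by the relation from Step 1, and they are visibly $\delta_F\delta_{E'}$.

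\textbf{Sanity checks.} For $E'=\emptyset$ the formula returns $H$. For $E'=E$ it gives $(\delta_E\delta_V,\delta_E^{-1},\delta_F\delta_E)=(\delta_F^{-1},\delta_E^{-1},\delta_V^{-1})$, which is the total dual up to orientation reversal. Applying the formula twice with the same $E'$, whose selected cycles are now those of $\delta_{E'}^{-1}$, returns $\delta_{E'}^{-1}\delta_{E'}\delta_V=\delta_V$, so the operation is an involution.

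\textbf{Main obstacle.} The hard step is the orientation bookkeeping in Step 2. One has to fix a precise convention for which corner a dart denotes and for the direction in which $\delta_V$, $\delta_E$ and $\delta_F$ rotate. Only then does boundary traversal produce $\delta_{E'}\delta_V$ rather than $\delta_V\delta_{E'}$ or inverses. These alternatives are conjugate, and hence isomorphic as hypermaps, but a wrong choice breaks the dart-level correspondence that the later sections rely on.

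I would first fix conventions by working out one ribbon-graph edge, where $\delta_E$ is an involution and the answer is classical partial duality. I would then check that the same local picture extends unchanged around a hyperedge polygon of arbitrary length.
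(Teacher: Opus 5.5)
\textbf{Relation to the paper.} The paper gives no proof of this statement. It quotes the formula from \cite{ChmutovVignes2022}, and its own work begins only with the translation $\delta_V=\sigma$, $\delta_E=\alpha^{-1}$ in Proposition~\ref{prop:partialdual}. Your outline is therefore an independent derivation from the four-step construction, and it has to stand on its own.

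\textbf{Step~1 is sound.} The product relation is correct, and so is the preservation of hyperedge supports and lengths. Transitivity does not need the geometric argument. An orbit of $\langle \delta_{E'}\delta_V,\ \delta_{\overline{E'}}\delta_{E'}^{-1}\rangle$ is invariant under the new hyperedge permutation, so it is a union of hyperedge supports. It is then invariant under $\delta_{E'}$ and $\delta_E$, hence under $\delta_V=\delta_{E'}^{-1}(\delta_{E'}\delta_V)$, so it is all of $D$.

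\textbf{The gap is Step~2.} This step carries the whole content of the theorem, and you assert its claims rather than derive them. Two points need an actual argument.
\begin{enumerate}
\item[(i)] The boundary traversal of $\Sigma$ must cross a selected hyperedge in the direction of $\delta_E$, not $\delta_E^{-1}$. This is what $\delta_F\delta_E\delta_V=1$ forces, since it makes $\delta_V$ and $\delta_E$ turn in the same sense around their cells. You must also say how the new darts of a selected $e$ are labelled by old darts; these new darts are the arcs of $\partial e$ between consecutive old darts. Labelling each arc by the old dart at its terminal end gives exactly $\delta_{E'}\delta_V$.
\item[(ii)] Whether exactly the selected cycles are reversed depends on the orientation given to the capping discs. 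It must be the orientation for which $E'=\varnothing$ returns $H$. That is, each new disc reads $\partial\Sigma$ in its $\Sigma$-induced direction, which is opposite to the orientation the disc inherits as a cap in $\Sigma\cup W$. With that choice, the unselected hyperedges glue on compatibly as they are, while the copy of a selected hyperedge must be glued with its orientation reversed. Your phrase ``from the opposite side'' does not fix this choice, and the other choice produces the mirror image.
\end{enumerate}

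\textbf{The risk is misdiagnosed.} You treat convention errors as harmless conjugations that only disturb dart labels. They are not. Crossing a selected hyperedge in the wrong direction replaces $\delta_{E'}\delta_V$ by $\delta_{E'}^{-1}\delta_V$, which in general has a different cycle type. For example, take one vertex and one hyperedge with $\delta_V=\delta_E=(1\,2\,3)$ and $E'=E$. Then $\delta_E\delta_V=(1\,3\,2)$ gives one new vertex, as it must because $H$ has one face. But $\delta_E^{-1}\delta_V=\mathrm{id}$ gives three.

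\textbf{The calibration would not catch this.} Calibrating on a single ribbon-graph edge cannot detect the error, since $\delta_{E'}=\delta_{E'}^{-1}$ on $2$-cycles. For the same reason, it cannot distinguish reversing the selected cycles from reversing the unselected ones. The calibration has to be done on a hyperedge of length at least $3$. For example, check $E'=E$, where the new vertices must be the old faces, and $E'=\varnothing$, then use the fact that the construction is local at each hyperedge.
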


Let $\alpha_{E'}$ denote the product of the $\alpha$-cycles corresponding to hyperedges in $E'$, and let $\alpha_{\overline{E'}}$ denote the product of the complementary hyperedge cycles. These two permutations have disjoint supports, hence commute, and satisfy
\[
\alpha=\alpha_{E'}\alpha_{\overline{E'}}.
\]

\begin{proposition}\label{prop:partialdual}
Let $H=(\sigma,\alpha)$ be an orientable hypermap, and let $E'\subseteq E(H)$. Then the partial dual $H^{E'}$ with respect to $E'$ can be written in the $(\sigma,\alpha)$ model as
\[
H^{E'}=(\sigma^{E'},\alpha^{E'}),
\]
where
\begin{equation}\label{eq:partialdual}
\sigma^{E'}=\alpha_{E'}^{-1}\sigma,
\qquad
\alpha^{E'}=\alpha_{E'}^{-1}\alpha_{\overline{E'}}.
\end{equation}
Equivalently, $\alpha^{E'}$ is obtained from $\alpha$ by reversing all cycles corresponding to selected hyperedges and leaving the remaining cycles unchanged.
\end{proposition}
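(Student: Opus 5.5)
The proof will be a translation of Theorem~\ref{thm:CVT-partial-duality} between the three-permutation and two-permutation models, followed by a direct computation. First fix the dictionary. The convention $\phi=\alpha^{-1}\sigma$ together with $\delta_F\delta_E\delta_V=1$ suggests taking $\delta_V=\sigma$ and $\delta_E=\alpha^{-1}$, so that $\delta_F=(\delta_E\delta_V)^{-1}=\sigma^{-1}\alpha$. This is a face permutation conjugate to $\phi^{-1}$, hence with the same cycle structure, as required. The restricted permutation is then $\delta_{E'}=\alpha_{E'}^{-1}$, since inverting $\alpha$ inverts each cycle separately and keeps the splitting by supports. Likewise $\delta_{\overline{E'}}=\alpha_{\overline{E'}}^{-1}$. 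Before using this, I will check that this identification of the two models is the one under which Chmutov and Vignes-Tourneret's cells correspond to ours. In particular, their hyperedge permutation should be $\alpha^{-1}$ rather than $\alpha$; otherwise the right-to-left composition convention would give the wrong product.

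Next I substitute into the theorem. The new vertex permutation is $\delta_{E'}\delta_V=\alpha_{E'}^{-1}\sigma$, which is exactly $\sigma^{E'}$. The new hyperedge permutation is $\delta_{\overline{E'}}\delta_{E'}^{-1}=\alpha_{\overline{E'}}^{-1}\alpha_{E'}$. Translating back to the two-permutation model inverts it, giving $\alpha^{E'}=\alpha_{E'}^{-1}\alpha_{\overline{E'}}$; here I use that $\alpha_{E'}$ and $\alpha_{\overline{E'}}$ commute because their supports are disjoint. As a consistency check, I will verify that the new face permutation $\delta_F\delta_{E'}=\sigma^{-1}\alpha\alpha_{E'}^{-1}=\sigma^{-1}\alpha_{\overline{E'}}$ agrees with $(\delta^{E'}_E\delta^{E'}_V)^{-1}$. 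This reduces to
\[
\bigl(\alpha_{\overline{E'}}^{-1}\alpha_{E'}\alpha_{E'}^{-1}\sigma\bigr)^{-1}=\sigma^{-1}\alpha_{\overline{E'}},
\]
which holds. I also need transitivity of $\langle\sigma^{E'},\alpha^{E'}\rangle$, so that the result is again a connected hypermap. This follows because the group contains $\alpha_{E'}$ and $\alpha_{\overline{E'}}$ as powers of cycles with disjoint supports, hence contains $\alpha$, and then $\sigma=\alpha_{E'}\sigma^{E'}$. Alternatively it can be inherited from the cited theorem.

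Finally, the equivalent description is immediate. $\alpha_{E'}^{-1}\alpha_{\overline{E'}}$ acts on the support of each selected hyperedge as the inverse cycle and on every other support as the original cycle. So each hyperedge keeps its support and length, and exactly the selected cycles are reversed.

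The main obstacle is the first step: getting the model conversion and the inversion conventions exactly right. The formula is sensitive to whether $\delta_E$ equals $\alpha$ or $\alpha^{-1}$ and to the composition order. I would settle this by checking the total dual case $E'=E(H)$. That case should produce the vertex permutation $\alpha^{-1}\sigma=\phi$, whose cycles are the faces of $H$ as expected. It should also give hyperedges $\alpha^{-1}$, the reversed hyperedges that appear in the standard orientable dual. If this check fails, I would instead use $\delta_E=\alpha$ and redo the substitution.
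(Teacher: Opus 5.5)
Your argument is correct and is essentially the paper's proof. You use the same dictionary $\delta_V=\sigma$, $\delta_E=\alpha^{-1}$ (so $\delta_{E'}=\alpha_{E'}^{-1}$) and substitute into the Chmutov--Vignes-Tourneret formula. You also make explicit the inversion $(\alpha_{\overline{E'}}^{-1}\alpha_{E'})^{-1}=\alpha_{E'}^{-1}\alpha_{\overline{E'}}$, which the paper leaves implicit; this identity holds without using commutativity.

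Your side justification of transitivity is wrong as stated, because $\langle\sigma^{E'},\alpha^{E'}\rangle$ need not contain $\alpha_{E'}$. For example, take $\sigma=(1\,3)(2\,4)$, $\alpha=(1\,2)(3\,4)$ and $E'=\{(1\,2)\}$. Then $\sigma^{E'}=(1\,3\,2\,4)$ and $\alpha^{E'}=(\sigma^{E'})^2$, so the group is cyclic of order $4$ and does not contain $(1\,2)$. Transitivity still holds, for a different reason. Every orbit of $\langle\sigma^{E'},\alpha^{E'}\rangle$ is a union of $\alpha$-cycle supports, so it is invariant under $\alpha_{E'}$, under $\alpha$, and under $\sigma=\alpha_{E'}\sigma^{E'}$, and is therefore all of $D$.
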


\begin{proof}
Write $H$ in the three-permutation notation $(\delta_V,\delta_E,\delta_F)$ of Chmutov and Vignes-Tourneret, where $\delta_V$ records vertices and $\delta_E$ records hyperedges. After translating to the present two-permutation notation, we may take $\delta_V=\sigma$, while $\delta_E$ is the reversal of the cycles of $\alpha$. Let $\delta_{E'}$ be the product of the cycles of $\delta_E$ corresponding to hyperedges in $E'$, and let $\delta_{\overline{E'}}$ be the product of the complementary cycles. Then
\[
\delta_{E'}=\alpha_{E'}^{-1},
\qquad
\delta_{\overline{E'}}=\alpha_{\overline{E'}}^{-1}.
\]
By Theorem~\ref{thm:CVT-partial-duality}, the partial dual with respect to $E'$ transforms the vertex permutation into
\[
\sigma^{E'}=\delta_{E'}\delta_V=\alpha_{E'}^{-1}\sigma.
\]
At the same time, the new hyperedge permutation is obtained exactly by reversing the selected cycles and leaving the unselected ones unchanged, that is,
\[
\alpha^{E'}=\alpha_{E'}^{-1}\alpha_{\overline{E'}}.
\]
This is precisely \eqref{eq:partialdual}.
\end{proof}

\paragraph{Supplementary remark.}
If one wishes to match Proposition~\ref{prop:partialdual} term by term with the three-permutation model, then the identity
\[
\delta_F\delta_E\delta_V=1
\]
implies
\[
\phi^{-1}\alpha^{-1}\sigma=1
\qquad\Longleftrightarrow\qquad
\phi=\alpha^{-1}\sigma,
\]
where $\phi$ is the face permutation, and hence $\delta_F=\phi^{-1}=\sigma^{-1}\alpha$. Therefore, by equation~\eqref{eq:partialdual}, the face permutation after partial duality is
\[
\phi^{E'}:=(\alpha^{E'})^{-1}\sigma^{E'}
=(\alpha_{E'}^{-1}\alpha_{\overline{E'}})^{-1}(\alpha_{E'}^{-1}\sigma)
=\alpha_{\overline{E'}}^{-1}\sigma,
\]
so that
\[
\begin{aligned}
(\phi^{E'})^{-1}
&=(\sigma^{E'})^{-1}\alpha^{E'} \\
&=(\alpha_{E'}^{-1}\sigma)^{-1}(\alpha_{E'}^{-1}\alpha_{\overline{E'}}) \\
&=\sigma^{-1}\alpha_{\overline{E'}}
=\sigma^{-1}\alpha\alpha_{E'}^{-1}
=\delta_F\delta_{E'}.
\end{aligned}
\]
which agrees exactly with the third component in the Chmutov--Vignes-Tourneret formula. We separate this derivation from the main proof only for clarity.

\begin{corollary}\label{cor:length-preserved}
The hyperedges of $H$ and $H^{E'}$ have the same multiset of lengths under the natural correspondence. More precisely, the cycle lengths of $\alpha$ and $\alpha^{E'}$ coincide exactly.
\end{corollary}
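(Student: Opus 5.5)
The claim follows directly from the explicit formula \eqref{eq:partialdual} in Proposition~\ref{prop:partialdual}. The plan is to decompose $\alpha$ into its disjoint cycles and track each one separately.

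Write
\[
\alpha=\alpha_{E'}\alpha_{\overline{E'}}=\prod_{e\in E'}c_e\prod_{e\in\overline{E'}}c_e,
\]
where $c_e$ is the cycle of $\alpha$ corresponding to the hyperedge $e$. These cycles have pairwise disjoint supports $\operatorname{supp}(c_e)$, so they commute. It follows that
\[
\alpha_{E'}^{-1}=\prod_{e\in E'}c_e^{-1},
\]
and hence
\[
\alpha^{E'}=\prod_{e\in E'}c_e^{-1}\prod_{e\in\overline{E'}}c_e .
\]
If $c_e=(i_1\,i_2\,\cdots\,i_k)$, then $c_e^{-1}=(i_k\,\cdots\,i_2\,i_1)$. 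This is again a single $k$-cycle with the same support $\{i_1,\dots,i_k\}$.

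The factors in the product for $\alpha^{E'}$ are therefore cycles with pairwise disjoint supports whose union is $D$. By uniqueness of the disjoint cycle decomposition, these factors (together with fixed points, which are $1$-cycles in both decompositions) are exactly the cycles of $\alpha^{E'}$. The natural correspondence sends $c_e$ to $c_e^{-1}$ for $e\in E'$ and to $c_e$ for $e\notin E'$. It is a bijection between the cycles of $\alpha$ and those of $\alpha^{E'}$ that preserves support and length. Consequently the multisets of hyperedge lengths coincide, and each hyperedge keeps its dart set.

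The only point needing care is the uniqueness step, namely that no two reversed cycles merge into one. Disjointness of supports rules this out immediately, so there is no real obstacle.
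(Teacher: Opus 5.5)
Your proof is correct and takes the same approach as the paper. Both read off from $\alpha^{E'}=\alpha_{E'}^{-1}\alpha_{\overline{E'}}$ that $\alpha^{E'}$ is obtained by inverting the cycles indexed by $E'$, which changes neither the support nor the length of any cycle. You spell out the commuting disjoint cycles and the uniqueness of the cycle decomposition, which the paper's one-line proof leaves implicit.
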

\begin{proof}
Equation~\eqref{eq:partialdual} shows that $\alpha^{E'}$ is obtained merely by reversing the cycles of $\alpha$ corresponding to hyperedges in $E'$, without changing the support or the length of any cycle.
\end{proof}

\begin{corollary}\label{cor:odd-obstruction}
If there exists $E'\subseteq E(H)$ such that $H^{E'}$ is bipartite, then every hyperedge of $H$ must have even length.
\end{corollary}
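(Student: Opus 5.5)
The plan is to combine Proposition~\ref{prop:adj-bip}, applied to the partial dual $H^{E'}=(\sigma^{E'},\alpha^{E'})$, with Corollary~\ref{cor:length-preserved}. Suppose $H^{E'}$ is bipartite, and let $\chi:V(H^{E'})\to\{1,2\}$ be a proper 2-coloring as in Proposition~\ref{prop:adj-bip}. Write $v'(i)$ for the $\sigma^{E'}$-cycle containing the dart $i$. For every dart $i$ we then have
\[
\chi\bigl(v'(i)\bigr)\neq\chi\bigl(v'(\alpha^{E'}(i))\bigr).
\]

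Now fix a hyperedge of $H^{E'}$, that is, a cycle $(i_1\,i_2\,\cdots\,i_k)$ of $\alpha^{E'}$. By the alternation statement in Proposition~\ref{prop:adj-bip}, the colors $\chi(v'(i_1)),\chi(v'(i_2)),\dots,\chi(v'(i_k))$ alternate between $1$ and $2$ around the cycle, including the wrap-around step $i_k\to i_1$. Since only two colors are available, $\chi(v'(i_{j+2}))=\chi(v'(i_j))$ for every $j$. Going once around the cycle therefore returns to the starting color after $k$ steps exactly when $k$ is even. Hence every cycle of $\alpha^{E'}$ has even length. A fixed point ($k=1$) is excluded automatically, since it would give a loop in $\Gamma(H^{E'})$.

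Finally, Corollary~\ref{cor:length-preserved} says that $\alpha^{E'}$ is obtained from $\alpha$ by reversing some cycles, without changing any cycle's support or length. So the cycle lengths of $\alpha$ are exactly those of $\alpha^{E'}$, and all of them are even. The argument does not involve $\sigma^{E'}$ beyond its role in defining the vertices, so no step is a real obstacle. The only point needing care is the parity of an alternating 2-coloring around a cycle, which is the standard fact that an odd cycle is not 2-colorable: the closed walk $v'(i_1),v'(i_2),\dots,v'(i_k),v'(i_1)$ in $\Gamma(H^{E'})$ has length $k$, and a bipartite graph has no closed walks of odd length.
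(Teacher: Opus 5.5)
Your proposal is correct and follows the paper's proof: apply the alternation criterion of Proposition~\ref{prop:adj-bip} to $H^{E'}=(\sigma^{E'},\alpha^{E'})$ to get that every cycle of $\alpha^{E'}$ has even length, then transfer this to $\alpha$ via Corollary~\ref{cor:length-preserved}. Your explicit parity argument (an odd closed walk in $\Gamma(H^{E'})$ cannot be properly 2-colored, and a fixed point gives a loop) spells out a step the paper leaves implicit.
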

\begin{proof}
If $H^{E'}$ is bipartite, then by Proposition~\ref{prop:adj-bip} the vertex colors along every hyperedge of $H^{E'}$ must alternate, so every hyperedge of $H^{E'}$ has even length. By Corollary~\ref{cor:length-preserved}, $H$ and $H^{E'}$ have the same hyperedge lengths, and the claim follows.
\end{proof}

\begin{remark}\label{rem:total-dual}
When $E'=E(H)$, equation~\eqref{eq:partialdual} reduces to
\[
\sigma^{E(H)}=\alpha^{-1}\sigma=\phi,
\qquad
\alpha^{E(H)}=\alpha^{-1},
\]
which is exactly the total dual with respect to all hyperedges: the new vertices are the old faces, while the hyperedges are preserved with reversed orientation \cite{ChmutovVignes2022}.
\end{remark}

\section{The medial map, smoothings, and state circles}

\subsection{Permutation definition of the medial map}

We use the medial-map construction of Cori and Hetyei \cite[Definition~3.1]{CoriHetyei2025}. Let
\[
D^{\pm}=\{1^-,1^+,\dots,n^-,n^+\}.
\]
If a cycle of $\alpha$ is $(i_1i_2\cdots i_k)$, then we include in $\sigma_M$ the cycle
\[
(i_1^-\,i_1^+\,i_2^-\,i_2^+\,\cdots\,i_k^-\,i_k^+).
\]
We further define
\[
\alpha_M:=\prod_{i\in D}(i^+\,\sigma(i)^-).
\]
Then $M(H)=(\sigma_M,\alpha_M)$ is an embedded map whose vertices correspond bijectively to the hyperedges of the original hypermap, while its edges correspond bijectively to the darts of the original hypermap.

\begin{lemma}\label{lem:checkerboard}
Let $\phi_M:=\alpha_M^{-1}\sigma_M=\alpha_M\sigma_M$ denote the face permutation of the medial map $M(H)$ associated with hypermap $H=(\sigma,\alpha)$. Then:
\begin{enumerate}
  \item the restriction of $\phi_M$ to the negative points $\{i^-:i\in D\}$ is isomorphic to $\sigma$;
  \item the restriction of $\phi_M$ to the positive points $\{i^+:i\in D\}$ is isomorphic to $\sigma^{-1}\alpha$, and hence has the same cycle decomposition as the original face permutation $\phi=\alpha^{-1}\sigma$;
  \item the medial map \(M(H)\) admits a canonical checkerboard coloring: the
\(\phi_M\)-cycles contained in \(D^-\) are colored black, and the
\(\phi_M\)-cycles contained in \(D^+\) are colored white. 
\end{enumerate}
\end{lemma}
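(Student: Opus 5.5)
The plan is a direct computation with the explicit permutations $\sigma_M$ and $\alpha_M$. First I would record how they act on individual points. By construction of $\sigma_M$ from the cycles of $\alpha$, we have $\sigma_M(i^-)=i^+$ and $\sigma_M(i^+)=\alpha(i)^-$ for every $i\in D$. By definition of $\alpha_M$, it is a fixed-point-free involution exchanging $i^+$ and $\sigma(i)^-$. In particular $\alpha_M^{-1}=\alpha_M$, which justifies writing $\phi_M=\alpha_M\sigma_M$.

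For item 1, I compute
\[
\phi_M(i^-)=\alpha_M(\sigma_M(i^-))=\alpha_M(i^+)=\sigma(i)^- .
\]
So $\phi_M$ preserves $D^-$, and the bijection $i\mapsto i^-$ conjugates $\sigma$ to $\phi_M|_{D^-}$.

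For item 2, I compute $\phi_M(i^+)=\alpha_M(\alpha(i)^-)$. I write $\alpha(i)^-=\sigma(j)^-$ with $j=\sigma^{-1}\alpha(i)$, so $\alpha_M$ sends it to $j^+$. This gives
\[
\phi_M(i^+)=(\sigma^{-1}\alpha(i))^+ .
\]
Hence $\phi_M$ preserves $D^+$, and $i\mapsto i^+$ conjugates $\sigma^{-1}\alpha$ to $\phi_M|_{D^+}$. Since $\sigma^{-1}\alpha=(\alpha^{-1}\sigma)^{-1}=\phi^{-1}$, its cycles are exactly the cycles of $\phi$ with reversed cyclic order. Thus the cycle decomposition has the same supports and lengths as that of $\phi$, and the white faces correspond bijectively to the faces of $H$.

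For item 3, items 1 and 2 show that every $\phi_M$-cycle lies entirely in $D^-$ or entirely in $D^+$, so the coloring is well defined and involves no choices. It remains to check properness: the two faces on either side of each edge of $M(H)$ must receive different colors. In a map $(\sigma_M,\alpha_M)$, the edge $\{x,\alpha_M(x)\}$ is bordered on its two sides by the faces containing $x$ and $\alpha_M(x)$. Every edge of $M(H)$ has the form $\{i^+,\sigma(i)^-\}$, with one dart in $D^+$ and one in $D^-$, so its two sides are one white face and one black face.

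The main subtle point is this last step: justifying, within the permutation model, that the two sides of an edge are the faces containing its two darts. I would handle it by the standard convention that a face cycle of $\alpha_M^{-1}\sigma_M$ traverses each edge once on each side. I would also note that a single face could in principle border an edge on both sides, but this is impossible here because the two darts of every edge lie in different halves $D^\pm$.
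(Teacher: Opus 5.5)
Your proposal is correct. Items 1 and 2 match the paper's computation: $\sigma_M(i^-)=i^+$, $\sigma_M(i^+)=\alpha(i)^-$, and $\alpha_M$ swaps $\alpha(i)^-$ with $(\sigma^{-1}\alpha(i))^+$. These give $\phi_M(i^-)=\sigma(i)^-$ and $\phi_M(i^+)=(\sigma^{-1}\alpha(i))^+$, with $\sigma^{-1}\alpha=\phi^{-1}$.

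For item 3 you take a different, edge-based route. Every edge $\{i^+,\sigma(i)^-\}$ has one dart in $D^+$ and one in $D^-$, and the two sides of $\{x,\alpha_M(x)\}$ are the faces through $x$ and $\alpha_M(x)$. So every edge separates a white face from a black one. The paper instead works at a medial vertex $v_e$ with rotation $i_1^-\,i_1^+\,\cdots\,i_k^-\,i_k^+$. It traces $i_j^-\mapsto i_j^+\mapsto\sigma(i_j)^-$ and $i_j^+\mapsto i_{j+1}^-\mapsto\sigma^{-1}(i_{j+1})^+$, showing that the corners $(i_j^-,i_j^+)$ are black and the corners $(i_j^+,i_{j+1}^-)$ are white, so colors alternate around each vertex.

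The two checks are equivalent formulations of properness. Yours is more direct and shows that properness comes purely from the shape of the transpositions of $\alpha_M$. Its cost is the side-of-an-edge convention, which you correctly isolate and justify. The paper's corner version, however, records information beyond the statement that is used immediately afterwards. The black/white smoothings and both cases of Proposition~\ref{prop:state-successor} rely on the black corner at $d^-$ being $(d^-,d^+)$ and the white corner at $d^-$ being $(\alpha^{-1}(d)^+,d^-)$. With your route you would still need to derive that local description separately, by the same face-tracing computation.
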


\begin{proof}
First take a negative point \(i^-\). By the definition of \(\sigma_M\), we have
\[
    \sigma_M(i^-)=i^+.
\]
Since \(\alpha_M\) contains the transposition \((i^+\,\sigma(i)^-)\), it follows that
\[
    \phi_M(i^-)
    =\alpha_M\sigma_M(i^-)
    =\alpha_M(i^+)
    =\sigma(i)^-.
\]
Thus the restriction of \(\phi_M\) to \(D^-\) is identified with \(\sigma\).

Now take a positive point \(i^+\). Write \(\alpha(i)=j\). By the construction of
\(\sigma_M\), the point following \(i^+\) around the corresponding medial vertex is
\(j^-\). Hence
\[
    \sigma_M(i^+)=j^-=\alpha(i)^-.
\]
Moreover, since \(\alpha_M\) contains the transposition
\[
    \bigl(\sigma^{-1}(j)^+\,j^-\bigr),
\]
we obtain
\[
    \phi_M(i^+)
    =\alpha_M\sigma_M(i^+)
    =\alpha_M(j^-)
    =\sigma^{-1}(j)^+
    =\sigma^{-1}\alpha(i)^+.
\]
Therefore the restriction of \(\phi_M\) to \(D^+\) is identified with
\(\sigma^{-1}\alpha\). Since
\[
    \sigma^{-1}\alpha=(\alpha^{-1}\sigma)^{-1}=\varphi^{-1},
\]
this restriction has the same cycle decomposition as the original face permutation
\(\phi=\alpha^{-1}\sigma\), with the cyclic order of each cycle reversed.

By parts (1) and (2), the face permutation
\[
        \phi_M=\alpha_M^{-1}\sigma_M=\alpha_M\sigma_M
\]
satisfies
\[
        \phi_M(D^-)\subseteq D^-,
        \qquad
        \phi_M(D^+)\subseteq D^+ .
\]
Hence every face-cycle of \(M(H)\) is contained entirely in either
\(D^-\) or \(D^+\). We color the face-cycles contained in \(D^-\) black
and the face-cycles contained in \(D^+\) white.

We now describe this coloring locally at a vertex of the medial map. Let
\[
        e=(i_1\,i_2\,\cdots\,i_k)
\]
be a hyperedge of \(H\). By the definition of the medial map, the
corresponding vertex \(v_e\) of \(M(H)\) has cyclic order
\[
        i_1^-\, i_1^+\, i_2^-\, i_2^+\,\cdots\, i_k^-\, i_k^+ .
\]
Thus the corners at \(v_e\) are of two alternating forms:
\[
        (i_j^-,i_j^+)
        \qquad\text{and}\qquad
        (i_j^+,i_{j+1}^-),
        \quad i_{k+1}:=i_1 .
\]

We claim that the corners of the first form are black corners and the
corners of the second form are white corners. Indeed, for a negative
point \(i_j^-\), the face traversal gives
\[
        i_j^-
        \xrightarrow{\ \sigma_M\ }
        i_j^+
        \xrightarrow{\ \alpha_M\ }
        \sigma(i_j)^- .
\]
Thus, when the corresponding face passes through \(v_e\), it passes
through the local corner \((i_j^-,i_j^+)\). Since the corresponding
\(\phi_M\)-cycle is contained in \(D^-\), this is a black face-corner.

On the other hand, for a positive point \(i_j^+\), we have
\[
        i_j^+
        \xrightarrow{\ \sigma_M\ }
        i_{j+1}^-
        \xrightarrow{\ \alpha_M\ }
        \sigma^{-1}(i_{j+1})^+
        =
        \sigma^{-1}\alpha(i_j)^+ .
\]
Thus, when the corresponding face passes through \(v_e\), it passes
through the local corner
\[
        (i_j^+,i_{j+1}^-)
        =
        (i_j^+,\alpha(i_j)^-).
\]
Since the corresponding \(\phi_M\)-cycle is contained in \(D^+\), this is
a white face-corner. 

Thus black and white face corners alternate at every vertex of \(M(H)\). Accordingly, such black-and-white coloring yields a checkerboard coloring of \(M(H)\).
\end{proof}

Figure~\ref{fig:checkerboard-MH0} illustrates this mechanism for the example
$M(H_0)$. In that example, the black faces are exactly the face-cycles contained in \(D^-\),
namely
\[
    (1^-\,8^-),\quad
    (2^-\,10^-\,11^-),\quad
    (3^-\,6^-),\quad
    (4^-\,5^-\,7^-),\quad
    (9^-\,12^-).
\]
The white faces are exactly the face-cycles contained in \(D^+\), namely
\[
    (1^+\,11^+\,9^+\,2^+\,6^+\,5^+\,3^+\,7^+),\quad
    (4^+\,8^+),\quad
    (10^+\,12^+).
\] 

\begin{figure}[htbp]
\centering
\includegraphics[width=0.4\linewidth]{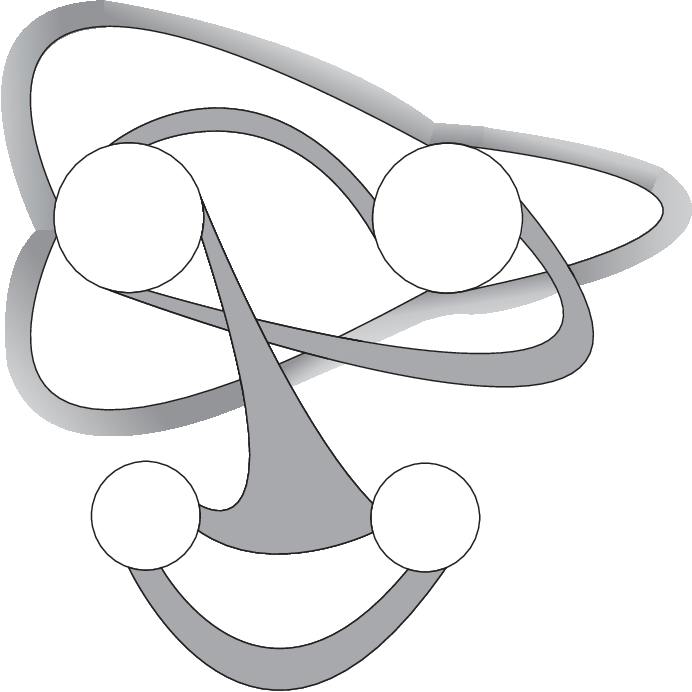}
\put(-162,138){\footnotesize$1^-$}
\put(-151,138){\footnotesize$1^+$}
\put(-142,129){\footnotesize$2^-$}
\put(-142,119){\footnotesize$2^+$}
\put(-151,109){\footnotesize$3^-$}
\put(-162,109){\footnotesize$3^+$}
\put(-170,118){\footnotesize$4^-$}
\put(-170,129){\footnotesize$4^+$}
\put(-75,138){\footnotesize$5^-$}
\put(-61,138){\footnotesize$5^+$}
\put(-55,128){\footnotesize$6^-$}
\put(-55,118){\footnotesize$6^+$}
\put(-61,109){\footnotesize$7^-$}
\put(-74,109){\footnotesize$7^+$}
\put(-83,118){\footnotesize$8^-$}
\put(-83,129){\footnotesize$8^+$}
\put(-144,36){\tiny$9^-$}
\put(-154,36){\tiny$9^+$}
\put(-145,50){\tiny$10^-$}
\put(-145,42){\tiny$10^+$}
\put(-85,42){\tiny$11^-$}
\put(-85,50){\tiny$11^+$}
\put(-70,36){\tiny$12^-$}
\put(-81,36){\tiny$12^+$}
\caption{Canonical checkerboard coloring of the medial map \(M(H_0)\).}
\label{fig:checkerboard-MH0}
\end{figure}

\subsection{Black/white smoothings and states}

We keep the canonical checkerboard coloring of $M(H)$ fixed as in Lemma~3.1.
Thus, at the medial vertex $v_e$ corresponding to a hyperedge
\[
e=(i_1 i_2 \cdots i_k),
\]
the black and white face-corners are the alternating corners described there:
\[
(i_j^-,i_j^+)
\quad\text{and}\quad
(i_j^+,i_{j+1}^-),
\qquad i_{k+1}:=i_1,
\]
respectively.

\begin{definition}
At the vertex $v_e$ of $M(H)$, the \emph{black smoothing} is the local
replacement which connects, for every $j$, the two half-edges incident with
the black corner $(i_j^-,i_j^+)$. Similarly, the \emph{white smoothing} is the
local replacement which connects, for every $j$, the two half-edges incident
with the white corner $(i_j^+,i_{j+1}^-)$.

Equivalently, each smoothing removes the vertex $v_e$ and replaces it by
$k$ degree-$2$ vertices, one for each corner of the chosen color. These local
pairings are non-crossing in the cyclic order around $v_e$.
\end{definition}

Let $E'\subseteq E(H)$ be a set of hyperedges. The state $S_{E'}$ is obtained
from $M(H)$ by applying the white smoothing at $v_e$ when $e\in E'$, and the
black smoothing at $v_e$ when $e\notin E'$. After all medial vertices are
smoothed, the resulting graph is $2$-regular. Its connected components are
called \emph{state circles}. The length of a state circle is the number of
medial edges contained in it.

An example is presented in Figure~\ref {fig:smoothing-AB}. Vertices of degree 2 are only added to illustrate the black and white smoothing operations, and no additional degree-2 vertices will be introduced for subsequent figures when black or white smoothing is applied.

\begin{figure}[htbp]
\centering
\includegraphics[width=0.4\linewidth]{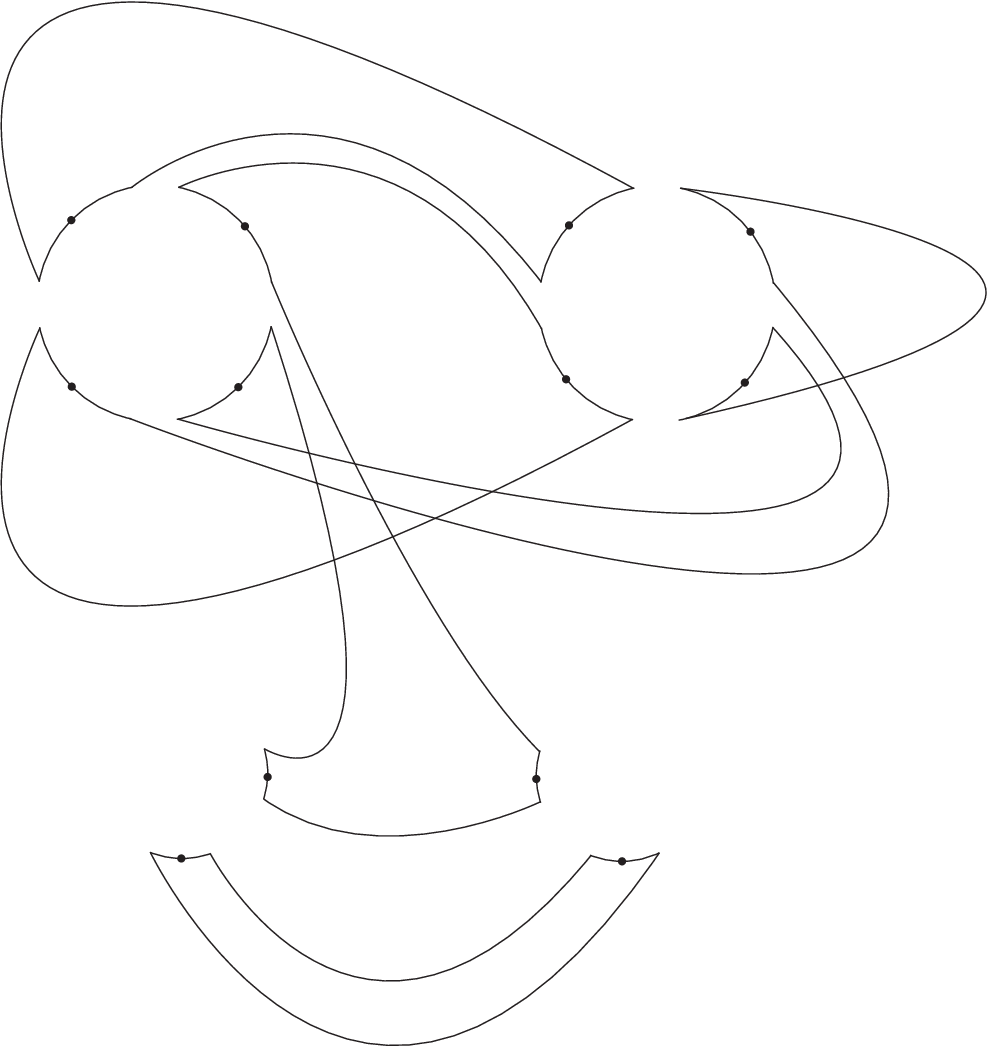}
\put(-167,153){\footnotesize$1^-$}
\put(-155,153){\footnotesize$1^+$}
\put(-148,143){\footnotesize$2^-$}
\put(-148,131){\footnotesize$2^+$}
\put(-154,122){\footnotesize$3^-$}
\put(-166,122){\footnotesize$3^+$}
\put(-178,133){\footnotesize$4^-$}
\put(-178,145){\footnotesize$4^+$}
\put(-73,153){\footnotesize$5^-$}
\put(-58,153){\footnotesize$5^+$}
\put(-52,143){\footnotesize$6^-$}
\put(-52,134){\footnotesize$6^+$}
\put(-56,122){\footnotesize$7^-$}
\put(-71,122){\footnotesize$7^+$}
\put(-83,134){\footnotesize$8^-$}
\put(-83,143){\footnotesize$8^+$}
\put(-149,39){\tiny$9^-$}
\put(-160,39){\tiny$9^+$}
\put(-149,53){\tiny$10^-$}
\put(-149,45){\tiny$10^+$}
\put(-82,46){\tiny$11^-$}
\put(-82,53){\tiny$11^+$}
\put(-62,39){\tiny$12^-$}
\put(-77,39){\tiny$12^+$}
\caption{The $S_{\{A,B\}}$ state of the medial map \(M(H_0)\).}
\label{fig:smoothing-AB}
\end{figure}

\section{The bijection between state circles and vertices of a partial dual}

The key correspondence of the paper is established in this section: the state circles of $S_{E'}$ are precisely the vertex-cycles of $H^{E'}$.

Recall that $\sigma$ and $\alpha$ stand for the two permutations of the original hypermap $H=(\sigma,\alpha)$ defined on its dart set $D$, while $\sigma_M$ and $\alpha_M$ denote the permutations associated with its medial map. For any dart $d\in D$, $e(d)$ represents the $\alpha$-cycle of $H$ containing $d$, namely the hyperedge incident to $d$. Accordingly, for every $i\in D$, $\sigma(i)$ is also a dart of $H$, and $e(\sigma(i))$ refers to the hyperedge (or equivalently, the $\alpha$-cycle) containing $\sigma(i)$.

For each dart $i\in D$, define \(m_i = (i^+, \sigma(i)^-)\) as the edge of the medial map $M(H)$ corresponding to $i$. Here we label the edge $m_i$ by the dart $i$.

\begin{proposition}\label{prop:state-successor}
Let \(E'\subseteq E(H)\). Define a map \(\pi_{E'}: D \to D\) as follows: $\pi_{E'}(i)$ denotes the index of the medial edge succeeding $m_i$ when traversing the medial edge $m_i$ from $i^+$ to $\sigma(i)^-$ along the state circle. Then
\[
\pi_{E'}(i) =
\begin{cases}
\sigma(i), & e(\sigma(i)) \notin E',\\
\alpha^{-1}(\sigma(i)), & e(\sigma(i))\in E',
\end{cases}
\tag{4.1}
\]
and hence
\[
\pi_{E'} = \alpha^{-1}_{E'}\sigma.
\]
\end{proposition}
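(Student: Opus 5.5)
The plan is to read off the successor directly from the local pairing at the medial vertex where $m_i$ ends. The edge $m_i=(i^+,\sigma(i)^-)$ arrives at the half-edge $\sigma(i)^-$. By the definition of $\sigma_M$, this half-edge sits at the medial vertex $v_e$ with $e=e(\sigma(i))$. Write $j:=\sigma(i)$, so that $e=(\cdots\,\alpha^{-1}(j)\;j\;\alpha(j)\cdots)$ and the cyclic order at $v_e$ contains $\dots,\alpha^{-1}(j)^-,\alpha^{-1}(j)^+,j^-,j^+,\dots$. The state circle leaves $v_e$ along the half-edge paired with $j^-$ by the chosen smoothing. We then identify which medial edge that half-edge belongs to.

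\emph{Case $e\notin E'$ (black smoothing).} By the definition of black smoothing, $j^-$ is paired with $j^+$ through the black corner $(j^-,j^+)$ described in Lemma~\ref{lem:checkerboard}. The half-edge $j^+$ is an end of the medial edge $(j^+,\sigma(j)^-)=m_j$, since $\alpha_M$ contains the transposition $(j^+\,\sigma(j)^-)$. We traverse it from $j^+$ to $\sigma(j)^-$, which is the same direction as in the definition of $\pi_{E'}$. Hence $\pi_{E'}(i)=j=\sigma(i)$.

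\emph{Case $e\in E'$ (white smoothing).} Here $j^-$ is paired with $\alpha^{-1}(j)^+$ through the white corner $(\alpha^{-1}(j)^+,j^-)$, which is the corner $(i_t^+,i_{t+1}^-)$ with $i_{t+1}=j$. Setting $l:=\alpha^{-1}(j)$, the half-edge $l^+$ lies on $m_l=(l^+,\sigma(l)^-)$, traversed from $l^+$ to $\sigma(l)^-$, again in the positive-to-negative direction. Hence $\pi_{E'}(i)=\alpha^{-1}(\sigma(i))$. This establishes (4.1).

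For the closed form, note that $\alpha_{E'}^{-1}$ acts as $\alpha^{-1}$ on darts of hyperedges in $E'$ and fixes all other darts. Therefore $\alpha_{E'}^{-1}\sigma(i)$ equals $\alpha^{-1}(\sigma(i))$ when $e(\sigma(i))\in E'$ and equals $\sigma(i)$ otherwise, which agrees with (4.1) casewise.

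The main point needing care is the consistency of traversal direction. Every medial edge is entered at its negative end and left at its positive end: we always exit through a $^+$ half-edge, because both smoothings pair each negative half-edge with a positive one. This is exactly what makes $\pi_{E'}$ well defined as a map $D\to D$ that respects the orientation of the definition. I would state this explicitly, noting that it relies on the alternation of $^-$ and $^+$ around each $v_e$ established in Lemma~\ref{lem:checkerboard}. One should also check the degenerate case $k=1$, where $\alpha^{-1}(j)=j$ and both smoothings produce the same pairing; formula (4.1) remains consistent there.
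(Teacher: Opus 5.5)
Your proof is correct and follows the paper's argument essentially step for step. You locate the arrival half-edge $\sigma(i)^-$ at $v_{e(\sigma(i))}$, read off its partner ($\sigma(i)^+$ under black smoothing, $\alpha^{-1}(\sigma(i))^+$ under white smoothing) from the corners of Lemma~\ref{lem:checkerboard}, and identify the outgoing medial edge; your casewise check of $\pi_{E'}=\alpha_{E'}^{-1}\sigma$ and your $k=1$ remark are useful details the paper leaves implicit.

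One wording slip: $m_i$ is traversed from its positive end $i^+$ to its negative end $\sigma(i)^-$. The invariant you need is therefore that each smoothing block is entered through a $^-$ half-edge and left through a $^+$ half-edge, not that medial edges are entered at their negative ends.
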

\begin{proof}
Fix a dart \(i\in D\) and put \(d:=\sigma(i)\). Then the endpoint reached is \(d^-\). This signed point lies at the medial vertex corresponding to
the hyperedge \(e(d)\) of the original hypermap.

We split the argument into two cases based on whether the hyperedge \(e(d)\) lies in \(E'\).

\medskip
\noindent
\textbf{Case 1: \(e(d)\notin E'\).}
\medskip

In state \(S_{E'}\), the vertex \(v_{e(d)}\) is equipped with a black smoothing. By the local characterization of black corners in Lemma 3.1, the unique black corner containing \(d^-\) is \((d^-,d^+)\). As a result, the black smoothing matches the half-edge at \(d^-\) with the half-edge at \(d^+\), and the path departs from the vertex via \(d^+\) after crossing this smoothing block.

The medial edge having positive terminal \(d^+\) is precisely \(m_d=(d^+,\sigma(d)^-)\). Hence the medial edge immediately following \(m_i\) is \(m_d\), leading to \(\pi_{E'}(i)=d=\sigma(i)\).

\medskip
\noindent
\textbf{Case 2: \(e(d)\in E'\).}
\medskip

Here the vertex \(v_{e(d)}\) is assigned a white smoothing under state \(S_{E'}\). Set \(r:=\alpha^{-1}(d)\); then \(\alpha(r)=d\), meaning the dart \(d\) is the immediate successor of \(r\) in the cyclic ordering of the hyperedge \(e(d)\). Lemma 3.1 implies the white corner associated with \(r\) equals \((r^+,\alpha(r)^-)=(r^+,d^-)\). Accordingly, the white smoothing pairs the half-edge incident to \(d^-\) with the half-edge incident to \(r^+\).

After traversing the smoothing block, the path exits the vertex through \(r^+\). The medial edge with positive terminal \(r^+\) is \(m_r=(r^+,\sigma(r)^-)\), so \(m_r\) is the medial edge succeeding \(m_i\). Substituting \(r=\alpha^{-1}(d)=\alpha^{-1}(\sigma(i))\), we conclude \(\pi_{E'}(i)=\alpha^{-1}(\sigma(i))\).

\end{proof}

\begin{theorem}\label{thm:state-vertex}
The state circles of $S_{E'}$ are in bijection with the vertices of the partial dual $H^{E'}$. More precisely, the darts belonging to a given state circle form a cycle of the permutation $\sigma^{E'}$, and the length of the state circle equals the length of that cycle.
\end{theorem}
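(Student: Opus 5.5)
The plan is to read the state circles directly off the permutation $\pi_{E'}$ of Proposition~\ref{prop:state-successor}, and then to identify $\pi_{E'}$ with the vertex permutation $\sigma^{E'}$ of Proposition~\ref{prop:partialdual}.

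\emph{Step 1: the medial edges of $S_{E'}$.} Smoothing removes only the medial vertices, never the medial edges. Hence the edge set of the $2$-regular graph $S_{E'}$ is exactly $\{m_i : i\in D\}$, and $i\mapsto m_i$ is a bijection from $D$ onto this set. I will check that $m_i$ is not double-counted: each $m_i$ is the transposition $(i^+\,\sigma(i)^-)$ of $\alpha_M$, and distinct $i$ give disjoint transpositions.

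\emph{Step 2: each edge is entered at $i^+$.} Orient every $m_i$ from $i^+$ to $\sigma(i)^-$. I must show that following a state circle, starting in this direction, always enters the next edge at its positive end. Proposition~\ref{prop:state-successor} already shows this: in both cases the path leaves the smoothed vertex through a positive point $d^+$ or $r^+$. Therefore the next edge $m_{\pi_{E'}(i)}$ is again traversed from its $+$ end to its $-$ end. This consistency is the key point, since a state circle a priori could reverse the orientation convention. The local pairings are involutions on half-edges, so each circle is traversed coherently. Moreover, every smoothing pairs a negative point with a positive point, because both black and white corners consist of one $-$ point and one $+$ point. This gives a canonical orientation of every state circle.

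\emph{Step 3: circles are orbits of $\pi_{E'}$.} By Step 2, walking along a state circle from $m_i$ produces the sequence $m_i, m_{\pi_{E'}(i)}, m_{\pi_{E'}^2(i)},\dots$. The map $\pi_{E'}$ is a bijection: it equals $\alpha_{E'}^{-1}\sigma$, or one can argue directly that each positive point is paired with exactly one negative point. So this sequence is periodic, and it closes up exactly when the circle returns to $m_i$. Hence the edge sets of the state circles are exactly the orbits of $\pi_{E'}$ on $D$. The length of a circle, which is its number of medial edges, equals the size of the orbit. In particular, no medial edge appears twice on a circle before closing. This follows because a $2$-regular graph's component, traversed in a fixed direction, visits each edge once.

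\emph{Step 4: identification with $\sigma^{E'}$.} Proposition~\ref{prop:state-successor} gives $\pi_{E'}=\alpha_{E'}^{-1}\sigma$. Proposition~\ref{prop:partialdual} gives $\sigma^{E'}=\alpha_{E'}^{-1}\sigma$. So the orbits of $\pi_{E'}$ are the cycles of $\sigma^{E'}$, that is, the vertices of $H^{E'}$, with matching lengths.

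The main obstacle is Step 2: making precise that the traversal direction is coherent, so that every edge is entered at its $+$ end. It also requires ruling out a circle passing through the same medial edge in both directions, for instance when $m_i$ is a loop at a medial vertex. I will handle this by working with half-edges (signed points) rather than geometric edges. I will define the state as the involution on $D^\pm$ given by $\alpha_M$ composed with the smoothing pairing. I will then observe that this involution composition, restricted to the $+$ points, is exactly $i^+\mapsto \pi_{E'}(i)^+$. This makes loops harmless.
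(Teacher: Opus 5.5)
Your proposal is correct and takes the paper's route. The paper's proof is exactly your Step~4: it identifies the state-circle successor map $\pi_{E'}=\alpha_{E'}^{-1}\sigma$ of Proposition~\ref{prop:state-successor} with the vertex permutation $\sigma^{E'}=\alpha_{E'}^{-1}\sigma$ of Proposition~\ref{prop:partialdual}. Your Steps~1--3 spell out what the paper leaves implicit, and they are sound: every smoothing pairs a $-$ point with a $+$ point, so the traversal is coherent, and viewing the state through the two matchings $\alpha_M$ and the smoothing pairing on $D^{\pm}$ shows that the edge sets of the state circles are precisely the $\pi_{E'}$-orbits, loops included.
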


\begin{proof}
By Proposition~\ref{prop:partialdual}, the vertex permutation of $H^{E'}$ is \(\sigma^{E'}=\alpha_{E'}^{-1}\sigma\). By Proposition~\ref{prop:state-successor}, the successor map along a state circle is also $\pi_{E'}=\alpha_{E'}^{-1}\sigma$. Therefore the state circles are exactly the cycles of $\sigma^{E'}$. The number of medial edges on a state circle is the number of darts in the corresponding cycle, which is precisely the degree of the corresponding vertex of $H^{E'}$.
\end{proof}

\begin{corollary}\label{cor:even-state}
The partial dual $H^{E'}$ is Eulerian if and only if every state circle of $S_{E'}$ has even length.
\end{corollary}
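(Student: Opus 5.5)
The corollary follows directly from Theorem~\ref{thm:state-vertex} together with Definition~\ref{def:ve-vb}(1). By that definition, $H^{E'}=(\sigma^{E'},\alpha^{E'})$ is Eulerian exactly when every cycle of the vertex permutation $\sigma^{E'}=\alpha_{E'}^{-1}\sigma$ has even length. So it suffices to show that the multiset of cycle lengths of $\sigma^{E'}$ equals the multiset of lengths of the state circles of $S_{E'}$.

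To see this, I invoke Proposition~\ref{prop:state-successor}. Each medial edge $m_i$ is traversed from $i^+$ to $\sigma(i)^-$, and the next edge along the state circle is $m_{\pi_{E'}(i)}$. Since $\pi_{E'}=\alpha_{E'}^{-1}\sigma$ is a permutation of $D$, the orbits of $\pi_{E'}$ partition the medial edges. I then check the following point, which is implicit in the proof of Theorem~\ref{thm:state-vertex}.

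\begin{itemize}
\item \textbf{Orbits are whole circles.} Each state circle is a cyclic sequence of medial edges, and we move from one edge to the next through a smoothing block at a medial vertex.
\item \textbf{Consistent orientation.} The traversal always leaves through a ``$+$'' end and enters through a ``$-$'' end. A black smoothing pairs $d^-$ with $d^+$, and a white smoothing pairs $d^-$ with $r^+$. In both cases a $-$ end is joined to a $+$ end.
\item \textbf{Conclusion.} The orientation is therefore consistent around the whole circle, and an entire state circle is a single $\pi_{E'}$-orbit.
\end{itemize}

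Hence the length of a circle equals the length of the corresponding cycle of $\sigma^{E'}$, which is exactly the content of Theorem~\ref{thm:state-vertex}.

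It follows that every vertex of $H^{E'}$ has even degree if and only if every state circle has even length, proving both directions at once. The only possible subtlety is the orientation consistency in the second bullet, and the local pairing description from Lemma~\ref{lem:checkerboard} settles it. No real obstacle remains.
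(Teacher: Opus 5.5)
Your proof is correct and follows the paper's route. The paper simply cites Theorem~\ref{thm:state-vertex} to identify the vertex degrees of $H^{E'}$ with the state-circle lengths, and then applies the definition of Eulerian. Your extra check, that every smoothing block joins a $-$ end to a $+$ end so each state circle is traversed consistently as a single $\pi_{E'}$-orbit, is a valid filling-in of a detail left implicit in the proof of Theorem~\ref{thm:state-vertex}, not a different argument.
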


\begin{proof}
By Theorem~\ref{thm:state-vertex}, the degrees of the vertices of $H^{E'}$ are exactly the lengths of the corresponding state circles, so the two statements are equivalent.
\end{proof}

\section{Eulerian partial duals: A crossing-total characterization}

In this section we orient the edges of $M(H)$ arbitrarily. For a corner at a vertex, if the two incident edges are both directed into the vertex or both directed out of the vertex, then we call that corner a \emph{source/sink corner}; otherwise it is a \emph{crossing corner}.

\begin{definition}\label{def:crossing-total}
Let $v$ be a vertex of a checkerboard-colored oriented map.
\begin{enumerate}
  \item If all edges incident with $v$ are directed into $v$, or all are directed out of $v$, then $v$ is called \emph{type-$t$}.
  \item If $v$ is not type-$t$ and every black corner at $v$ is a source/sink corner, then $v$ is called \emph{type-$c$}.
  \item If $v$ is not type-$t$ and every white corner at $v$ is a source/sink corner, then $v$ is called \emph{type-$d$}.
\end{enumerate}
An orientation of the edges of $M(H)$ is called a \emph{crossing-total direction} if every vertex of $M(H)$ is of one of the above three types.
\end{definition}

\begin{remark}\label{rem:types-disjoint}
The three types in Definition~5.1 are pairwise disjoint. Indeed, if a vertex
simultaneously satisfies ``all black corners are source/sink corners'' and
``all white corners are source/sink corners'', then every pair of consecutive
incident edges around the vertex is either both directed into the vertex or both
directed out of the vertex. Hence all incident edges have the same in/out status
at the vertex, and the vertex must be $t$-type.
\end{remark}

An illustrative example is shown in Figure~\ref{fig:local-types}. In the left figure, all incident edges are directed into the vertex, making the vertex type-t; in the middle figure, all black corners are source/sink corners, making the vertex type-c; and in the right figure, all white corners are source/sink corners, making the vertex type-d.

\begin{figure}[htbp]
\centering
\includegraphics[width=0.6\linewidth]{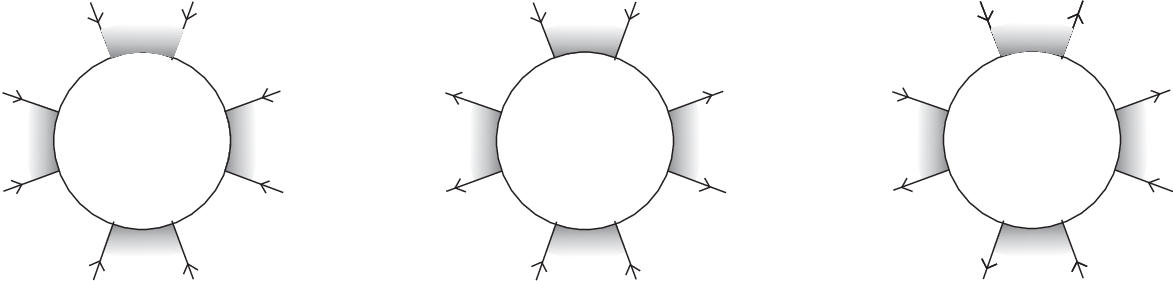}
\put(-275,-10){Type-$t$ vertex}
\put(-168,-10){Type-$c$ vertex}
\put(-66,-10){Type-$d$ vertex}
\caption{Type-$c$, $d$, and $t$ vertices.}
\label{fig:local-types}
\end{figure}

For a crossing-total direction $\Omega$, let $D(\Omega)$ denote the set of all type-$d$ hyperedges and let $T(\Omega)$ denote the set of all type-$t$ hyperedges.

\begin{theorem}\label{thm:eulerian-char}
Let $H=(\sigma,\alpha)$ be an orientable hypermap, and let $E'\subseteq E(H)$. The following are equivalent:
\begin{enumerate}
  \item $H^{E'}$ is Eulerian.
  \item There exists a crossing-total direction $\Omega$ of $M(H)$ such that
  \[
  E'=D(\Omega)\cup T',\qquad T'\subseteq T(\Omega).
  \]
\end{enumerate}
\end{theorem}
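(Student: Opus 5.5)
The plan is to reduce everything to Corollary~\ref{cor:even-state}: $H^{E'}$ is Eulerian if and only if every state circle of $S_{E'}$ has even length. The bridge between even state circles and crossing-total directions is the standard alternating orientation of a $2$-regular graph. A circle of even length admits an orientation of its edges in which consecutive edges alternately point "toward" and "away from" each other. That is, at every degree-$2$ smoothing point the two edges are both incoming or both outgoing. Conversely, such an orientation exists only if the length is even.

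For (1)$\Rightarrow$(2), assume $H^{E'}$ is Eulerian. By Corollary~\ref{cor:even-state} every state circle of $S_{E'}$ is even, so we orient each circle alternately. This orients every edge of $M(H)$, since state circles partition the medial edges, and we call the resulting orientation $\Omega$.

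At a vertex $v_e$, every corner used by the smoothing becomes a source/sink corner. If $e\in E'$ the smoothing is white, so all white corners are source/sink corners; hence $v_e$ is type-$d$, or type-$t$ if it happens to be all-in/all-out. If $e\notin E'$ the smoothing is black, and $v_e$ is type-$c$ or type-$t$. Thus $\Omega$ is crossing-total.

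It remains to check the decomposition of $E'$. Every type-$d$ vertex lies in $E'$, because a vertex with $e\notin E'$ is type-$c$ or type-$t$, and type $c$ and $d$ are disjoint by Remark~\ref{rem:types-disjoint}. Every $e\in E'$ is type-$d$ or type-$t$. So $E'=D(\Omega)\cup T'$ with $T'=E'\cap T(\Omega)$.

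For (2)$\Rightarrow$(1), take $\Omega$ with $E'=D(\Omega)\cup T'$ and $T'\subseteq T(\Omega)$. At each $v_e$, the smoothing chosen by $S_{E'}$ pairs the half-edges across corners of one colour. If $e\in D(\Omega)$ the smoothing is white and white corners are source/sink. If $e\in T'$ the smoothing is white, but all corners are source/sink since the vertex is type-$t$. If $e\notin E'$, then $e$ is type-$c$ or lies in $T(\Omega)\setminus T'$; the smoothing is black, and black corners are source/sink in both cases.

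Hence along each state circle, at every smoothing point the two consecutive edges are both in or both out. Traversing a circle, the edge directions relative to the traversal must then alternate, forward and backward. Returning to the start forces the length to be even. Corollary~\ref{cor:even-state} then gives that $H^{E'}$ is Eulerian.

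The main obstacle is making the "alternation forces even length" step precise when a circle passes through the same medial vertex several times, or uses a loop edge of $M(H)$. To handle this, I would phrase everything in terms of darts and the successor map $\pi_{E'}$ of Proposition~\ref{prop:state-successor}. Encode the orientation of $m_i$ as a sign $\epsilon(i)\in\{\pm1\}$, recording whether it points from $i^+$ to $\sigma(i)^-$. The source/sink condition at the smoothing point joining $m_i$ to $m_{\pi_{E'}(i)}$ then reads $\epsilon(\pi_{E'}(i))=-\epsilon(i)$. This is a local condition on half-edges, so repeated visits to a vertex cause no trouble. A sign-alternating function on a cycle of $\pi_{E'}$ exists if and only if the cycle has even length, which settles both directions cleanly.
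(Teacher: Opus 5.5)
Your proposal is correct and follows the paper's proof essentially step for step. You reduce to Corollary~\ref{cor:even-state}, orient each even state circle alternately for (1)$\Rightarrow$(2), and use the forced sign alternation along a state circle for (2)$\Rightarrow$(1), with the same white/black case analysis at each medial vertex. Your dart-level encoding $\epsilon(\pi_{E'}(i))=-\epsilon(i)$ is a tidier version of the paper's $s_{r+1}=-s_r$ argument. It also handles loops and repeated visits to a medial vertex correctly, which the paper treats only informally.
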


\begin{proof}
By Corollary~\ref{cor:even-state}, statement (1) is equivalent to saying that all state circles of $S_{E'}$ have even length. It therefore suffices to prove
\[
S_{E'}\text{ is an even state}
\Longleftrightarrow
\exists\,\text{ crossing-total direction $\Omega$ such that }E'=D(\Omega)\cup T'.
\]

\noindent(1)$\Rightarrow$(2). Assume that $S_{E'}$ is an even state. For each state circle $C$, choose an arbitrary traversal direction and list its edges as
\[
f_1,f_2,\dots,f_{2m}.
\]
Orient them alternately by parity: direct $f_{2r-1}$ along the traversal direction and $f_{2r}$ opposite to it. Then every degree-$2$ vertex on the state circle becomes either a source or a sink.

Because smoothing changes only the local connections at vertices and does not alter the edge set itself, this orientation descends directly to an orientation $\Omega$ of the edges of $M(H)$. Let $e$ be any hyperedge and let $v_e$ be the corresponding medial vertex.
\begin{itemize}
  \item If $e\in E'$, then $v_e$ is white-smoothed. Each white smoothing block becomes a degree-$2$ vertex in the state, and each such vertex is a source or a sink. Hence every white corner of $v_e$ is a source/sink corner. If in addition all incident edges are co-oriented, then $v_e$ is type-$t$; otherwise it is type-$d$.
  \item If $e\notin E'$, then $v_e$ is black-smoothed. The same argument shows that every black corner is a source/sink corner, so $v_e$ is either type-$c$ or type-$t$.
\end{itemize}
Thus $\Omega$ is a crossing-total direction, and
\[
e\in E'\Rightarrow e\in D(\Omega)\cup T(\Omega),
\qquad
e\notin E'\Rightarrow e\notin D(\Omega)\setminus T(\Omega).
\]
Therefore every type-$d$ hyperedge belongs to $E'$, while the type-$t$ hyperedges may be chosen freely. Setting
\[
T':=E'\cap T(\Omega)
\]
gives $E'=D(\Omega)\cup T'$.

\noindent(2)$\Rightarrow$(1). Assume that there exists a crossing-total direction $\Omega$ such that
\[
E'=D(\Omega)\cup T',\qquad T'\subseteq T(\Omega).
\]
Construct the state $S_{E'}$ by taking white smoothing for $e\in E'$ and black smoothing for $e\notin E'$. By assumption:
\begin{itemize}
  \item if $e\in D(\Omega)$, then all white corners are source/sink corners;
  \item if $e\notin E'$ and $e$ is type-$c$, then all black corners are source/sink corners;
  \item if $e$ is type-$t$, then regardless of whether one chooses black or white smoothing, each smoothing block clearly yields a source or a sink.
\end{itemize}
Hence every degree-$2$ vertex produced by smoothing is a source or a sink.

Now take any state circle $C$ and traverse its edges in some direction, writing them as
\[
g_1,g_2,\dots,g_{\ell}.
\]
Define
\[
s_r=
\begin{cases}
+1,& \text{if the traversal direction agrees with the orientation of }g_r,\\
-1,& \text{if the traversal direction is opposite to the orientation of }g_r.
\end{cases}
\]
If $g_r$ and $g_{r+1}$ meet at a degree-$2$ vertex $u$, then $u$ is a source or
a sink. Hence the two edges are both directed into $u$ or both directed out of $u$.
Since one edge enters and the other leaves during the traversal through $u$, we must have
\[s_{r+1}=-s_r .\]

Thus recursively
\[
s_{\ell+1}=(-1)^{\ell}s_1.
\]
But after one full turn of the closed walk we must have $s_{\ell+1}=s_1$, hence $(-1)^{\ell}=1$ and therefore $\ell$ is even. So every state circle has even length, that is, $S_{E'}$ is an even state. Corollary~\ref{cor:even-state} now implies that $H^{E'}$ is Eulerian.
\end{proof}

\section{Bipartite partial duals: an all-crossing characterization}

\subsection{\texorpdfstring{$x/y$}{x/y} labels and all-crossing directions}

Fix the orientation of the underlying surface and the checkerboard coloring from Lemma~\ref{lem:checkerboard}. Given any orientation of the edges of $M(H)$, define for each oriented edge $f$ the label
\[
\lambda(f)=
\begin{cases}
x,& \text{if the black face lies on the right of }f,\\
y,& \text{if the black face lies on the left of }f.
\end{cases}
\]

The left panel of Figure~\ref{fig:xy-corner-labels} shows the labels when both edges are directed into the black corner $u$, whereas the right panel shows them when one edge points into $u$ and the other out of it.

\begin{figure}[htbp]
\centering
\includegraphics[width=0.5\linewidth]{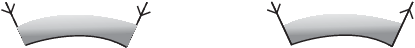}
\put(-234,10){$y$}
\put(-155,10){$x$}
\put(-85,10){$y$}
\put(-3,10){$y$}
\put(-195,-4){$u$}
\put(-42.5,-4){$u$}
\caption{$x$ and $y$ labels at a black corner $u$.}
\label{fig:xy-corner-labels}
\end{figure}

\begin{lemma}\label{lem:xy-angle}
Let $u$ be a vertex, and let $a,b$ be two consecutive edges forming a corner at $u$.
Then
\[\text{$a,b$ are both directed into $u$ or both directed out of $u$}
\quad\Longleftrightarrow\quad
\lambda(a)\ne \lambda(b).\]
Equivalently,
\[\text{one of $a,b$ is directed into $u$ and the other is directed out of $u$}
\quad\Longleftrightarrow\quad
\lambda(a)=\lambda(b).\]
\end{lemma}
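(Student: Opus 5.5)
The plan is a purely local computation at the corner. Fix the orientation of the surface and work in a small disk around $u$. In that disk the corner formed by the consecutive edges $a,b$ is a sector bounded by $a$ and $b$, and the face $F$ occupying this sector lies on one side of $a$ and on one side of $b$. The key observation is geometric. If we traverse $a$ toward $u$ and then continue along $b$ away from $u$, the sector between them is on the same side (left or right) of both half-edges. This is the standard fact that a face boundary walk keeps the face consistently on one side, for example on the left for the counterclockwise convention. Equivalently: if $a$ is directed into $u$ and $b$ is directed out of $u$, then $F$ lies on the same side of $a$ as of $b$. If both are directed into $u$ (or both out of $u$), then reversing one of them reverses the side, so $F$ lies on opposite sides of $a$ and $b$.

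Next, convert sides of $F$ into labels. By Lemma~\ref{lem:checkerboard}, each edge of $M(H)$ has exactly one black face and one white face incident with it, since the coloring is a checkerboard coloring. Hence the side on which the black face lies is determined by the side on which the white face lies. If $F$ is black, then $\lambda(a)$ and $\lambda(b)$ are read off directly from the side of $F$. If $F$ is white, the black face is on the opposite side of each edge, and both labels flip together. Either way, $\lambda(a)=\lambda(b)$ exactly when $F$ lies on the same side of $a$ and of $b$.

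Combining the two steps gives: one edge in and one edge out $\Leftrightarrow$ $F$ is on the same side of both $\Leftrightarrow$ $\lambda(a)=\lambda(b)$. The first equivalence in the statement is the contrapositive. The picture in Figure~\ref{fig:xy-corner-labels} is the black-corner instance of this argument.

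The only delicate step is the first one, that the face is on the same side when the edges pass through $u$ as in/out. Loops at $u$ or edges whose two sides belong to the same face could make "the side of $F$" ambiguous. To handle this, I would phrase the argument with half-edges (darts $i^\pm$) and the local rotation at $u$ given by $\sigma_M$. The orientation of $a$ relative to $u$ is then a property of the half-edge at this corner, and the side of the black face is likewise read locally from the corner colors $(i_j^-,i_j^+)$ (black) and $(i_j^+,i_{j+1}^-)$ (white) computed in Lemma~\ref{lem:checkerboard}. With this local phrasing, no global ambiguity can arise.
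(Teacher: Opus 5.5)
Your proof is correct and follows essentially the same route as the paper's. Both use the local left/right observation at the corner: an in/out pair keeps the corner face on the same side of both edges, an in/in or out/out pair puts it on opposite sides, and the checkerboard property turns this into $\lambda(a)=\lambda(b)$ or $\lambda(a)\ne\lambda(b)$. Your explicit treatment of white corners (both labels flip together) and your half-edge phrasing for loops at $u$ make precise what the paper's short proof leaves implicit. Loops do occur in $M(H)$; for example, $m_5$ is a loop at $v_B$ in $M(H_0)$.
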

\begin{proof}
This is a local observation in an oriented neighborhood of the corner. If $a$ and
$b$ are both directed into $u$, or both directed out of $u$, then when one looks
along the two edge directions, the common incident face is seen with opposite
left/right positions. Hence the condition that the black face lies on the left or
on the right is reversed, and the two labels are different.

If, on the other hand, one of $a,b$ is directed into $u$ and the other is directed out of $u$, then the local left/right relation with respect to the common incident face is preserved. Hence the two labels are the same.
\end{proof}

\begin{definition}\label{def:all-crossing}
An orientation of the edges of $M(H)$ is called \emph{all-crossing} if, at every vertex, exactly one of the following holds:
\begin{enumerate}
  \item all black corners are source/sink corners and all white corners are crossing corners;
  \item all white corners are source/sink corners and all black corners are crossing corners.
\end{enumerate}
In case (1) the vertex is called a \emph{type-$c$ all-crossing vertex}; in case (2) it is called a \emph{type-$d$ all-crossing vertex}. For an all-crossing direction $\Phi$, let $C(\Phi)$ denote the set of all type-$c$ hyperedges.
\end{definition}

\begin{remark}\label{rem:allcrossing-ribbon}
When every hyperedge has length $2$, each medial vertex is $4$-valent, and Definition~\ref{def:all-crossing} reduces exactly to the standard notion of an all-crossing direction in the ribbon-graph literature \cite{HuggettMoffatt2013,DengJinMetsidik2020}.
\end{remark}

\subsection{Local adjacency of neighboring smoothing blocks around a hyperedge}

The following lemma is the indispensable local step in the proof of the bipartite theorem: it identifies adjacent smoothing blocks around a hyperedge with adjacency in the partial dual induced by a dart.

\begin{lemma}\label{lem:adjacent-blocks}
Let \(e=(i_1\,i_2\,\cdots\,i_k)\) be a hyperedge of \(H\), and let \(v_e\) be the corresponding vertex of the medial map \(M(H)\). Let \(H^{E'}=(\alpha_{E'}^{-1}\sigma,\alpha_{E'}^{-1}\alpha_{\overline{E'}})\) be the partial dual of \(H\) with respect to \(E'\).

\begin{enumerate}
\item If \(e\notin E'\), then \(S_{E'}\) uses the black smoothing at \(v_e\).
Let \(B_j\) be the smoothing block corresponding to the black corner
\(
(i_j^-,i_j^+),
\)
and let \(C_j\) be the state circle passing through \(B_j\). Then, for each
\(j\), the two vertices of \(H^{E'}\) corresponding to \(C_j\) and \(C_{j+1}\)
are adjacent by the edge \(e(i_j)\), indices are understood modulo \(k\).

\item If \(e\in E'\), then \(S_{E'}\) uses the white smoothing at \(v_e\).
Let \(W_j\) be the smoothing block corresponding to the white corner
\(
(i_j^+,i_{j+1}^-),
\)
and let \(C_j\) be the state circle passing through \(W_j\). Then, for each
\(j\), the two vertices of \(H^{E'}\) corresponding to \(C_j\) and \(C_{j+1}\)
are adjacent by the edge \(e(i_{j+1})\). 
\end{enumerate}

Consequently, any two consecutive smoothing blocks around the
cyclic order of a hyperedge correspond to adjacent vertices of the
partial dual \(H^{E'}\).
\end{lemma}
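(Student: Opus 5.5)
The plan is to reduce everything to dart bookkeeping via Theorem~\ref{thm:state-vertex}. That theorem identifies each state circle of $S_{E'}$ with the $\sigma^{E'}$-cycle formed by the labels of its medial edges, where $m_i=(i^+,\sigma(i)^-)$ carries label $i$. So to find the vertex of $H^{E'}$ corresponding to $C_j$, it suffices to name one medial edge passing through the block in question. Write $v'(i)$ for the $\sigma^{E'}$-cycle containing the dart $i$. Adjacency in $H^{E'}$ is read off from Definition~\ref{def:ve-vb}: each dart $i$ gives an edge of $\Gamma(H^{E'})$ joining $v'(i)$ and $v'(\alpha^{E'}(i))$, and this edge belongs to the hyperedge $e(i)$. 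By Proposition~\ref{prop:partialdual}, $\alpha^{E'}$ agrees with $\alpha$ on unselected hyperedges and with $\alpha^{-1}$ on selected ones.

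\emph{Case $e\notin E'$.} The black block $B_j$ joins the half-edge at $i_j^-$ to the half-edge at $i_j^+$. The half-edge at $i_j^+$ belongs to $m_{i_j}$; the one at $i_j^-$ belongs to $m_{\sigma^{-1}(i_j)}$. Hence $C_j$ contains the dart $i_j$, and $C_j$ corresponds to $v'(i_j)$. Likewise $C_{j+1}$ corresponds to $v'(i_{j+1})$. Since $e\notin E'$, we have $\alpha^{E'}(i_j)=\alpha(i_j)=i_{j+1}$. So the dart $i_j$ yields an edge of $\Gamma(H^{E'})$ joining $v'(i_j)$ and $v'(i_{j+1})$, belonging to the hyperedge $e(i_j)=e$. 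This is exactly item~(1).

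\emph{Case $e\in E'$.} The white block $W_j$ joins the half-edges at $i_j^+$ and $i_{j+1}^-$. These lie on $m_{i_j}$ and $m_{\sigma^{-1}(i_{j+1})}$ respectively, consistent with Case~2 of Proposition~\ref{prop:state-successor}. Thus $C_j$ corresponds to $v'(i_j)$ and $C_{j+1}$ to $v'(i_{j+1})$. Now $\alpha^{E'}$ reverses $e$, so $\alpha^{E'}(i_{j+1})=\alpha^{-1}(i_{j+1})=i_j$. Therefore the dart $i_{j+1}$ produces an edge joining $v'(i_{j+1})$ and $v'(i_j)$, belonging to the hyperedge $e(i_{j+1})=e$. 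This is item~(2). The final sentence of the lemma follows by combining the two cases, since every medial vertex is smoothed in one of these two ways.

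The main obstacle is the first step: verifying carefully that the half-edge at a signed point $p$ lies on the medial edge whose label is forced by $\alpha_M$. The point $i^+$ lies on $m_i$, and $d^-$ lies on $m_{\sigma^{-1}(d)}$. I would justify this directly from $\alpha_M=\prod_i(i^+\,\sigma(i)^-)$. I would also remark that when $C_j=C_{j+1}$ the adjacency is a loop; this is harmless for the statement but relevant later in the bipartite argument.
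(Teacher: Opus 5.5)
Your proposal is correct and follows essentially the same route as the paper: use Theorem~\ref{thm:state-vertex} to identify $C_j$ with the $\sigma^{E'}$-cycle of $i_j$, via the medial edge $m_{i_j}$ at the block's positive point. Then read off the adjacency from $\alpha^{E'}(i_j)=i_{j+1}$ when $e\notin E'$, or from $\alpha^{E'}(i_{j+1})=i_j$ when $e\in E'$. Your extra bookkeeping of the negative half-edge on $m_{\sigma^{-1}(i_j)}$ and your remark about loops when $C_j=C_{j+1}$ are consistent with this but not needed.
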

\begin{proof}
We appeal to Theorem~4.2, which identifies the state circles of \(S_{E'}\) with
the cycles of the vertex permutation \(\sigma^{E'}\) of \(H^{E'}\).
Specifically, if a state circle contains the medial edge \(m_d\), then the
corresponding vertex of \(H^{E'}\) is the \(\sigma^{E'}\)-cycle containing the
dart \(d\).

Consider first the case \(e\notin E'\). In \(S_{E'}\), the vertex \(v_e\) is
smoothed black. The black smoothing block \(B_j\) contains the point
\(i_j^+\). Since the medial edge with positive endpoint \(i_j^+\) is
\[
m_{i_j}=(i_j^+,\sigma(i_j)^-),
\]
the state circle \(C_j\) contains \(m_{i_j}\). Consequently, the vertex of
\(H^{E'}\) corresponding to \(C_j\) is the one containing the dart \(i_j\).
Similarly, \(B_{j+1}\) contains \(i_{j+1}^+\), so \(C_{j+1}\) corresponds to
the vertex containing \(i_{j+1}\). Because \(e\notin E'\), the hyperedge cycle
\(e\) is not reversed under the partial dual, hence
\[
\alpha^{E'}(i_j)=\alpha(i_j)=i_{j+1}.
\]
Thus, in \(H^{E'}\), the edge \(e(i_j)\) connects the vertex containing
\(i_j\) to the vertex containing \(i_{j+1}\). These are precisely the
vertices corresponding to \(C_j\) and \(C_{j+1}\), respectively. This proves
part~(1).

Now assume \(e\in E'\). Then \(S_{E'}\) applies the white smoothing at
\(v_e\). The white smoothing block \(W_j\) contains \(i_j^+\), so, as before,
\(W_j\) lies on a state circle \(C_j\) that contains the medial edge
\(m_{i_j}\), and therefore \(C_j\) corresponds to the vertex of \(H^{E'}\)
containing \(i_j\). Likewise, \(W_{j+1}\) contains \(i_{j+1}^+\), so
\(C_{j+1}\) corresponds to the vertex containing \(i_{j+1}\). Since
\(e\in E'\), the hyperedge cycle \(e\) is reversed in the partial dual, so on
this hyperedge we have
\[
\alpha^{E'}=\alpha^{-1}.
\]
In particular,
\[
\alpha^{E'}(i_{j+1})=\alpha^{-1}(i_{j+1})=i_j.
\]
Therefore, in \(H^{E'}\), the edge \(e(i_{j+1})\) connects the vertex
containing \(i_{j+1}\) to the vertex containing \(i_j\). These are exactly
the vertices corresponding to \(C_{j+1}\) and \(C_j\), respectively. Hence
\(C_j\) and \(C_{j+1}\) correspond to adjacent vertices, with the adjacency
induced by the dart \(i_{j+1}\). This establishes part~(2), and the final
assertion is then immediate.
\end{proof}

\begin{theorem}\label{thm:bip-char}
Let $H=(\sigma,\alpha)$ be an orientable hypermap, and let $E'\subseteq E(H)$. The following are equivalent:
\begin{enumerate}
  \item $H^{E'}$ is bipartite.
  \item There exists an all-crossing direction $\Phi$ of $M(H)$ such that
  \[
  E'=C(\Phi),
  \] where $C(\Phi)$ denotes the set of all type-$c$ hyperedges.
\end{enumerate}
\end{theorem}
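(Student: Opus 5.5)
The plan is to translate both conditions into a statement about the state circles of $S_{E'}$, using Theorem~\ref{thm:state-vertex}, Lemma~\ref{lem:xy-angle} and Lemma~\ref{lem:adjacent-blocks}. Compare Definition~\ref{def:all-crossing} with the smoothing rule. The requirement $E'=C(\Phi)$ says that at $v_e$ with $e\in E'$ (white-smoothed) the white corners must be crossing and the black corners source/sink. At $v_e$ with $e\notin E'$ (black-smoothed) it is the reverse. In both cases the requirement reads: \emph{every smoothing corner of $S_{E'}$ is a crossing corner, and every non-smoothing corner is a source/sink corner.}

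Crossing smoothing corners mean that every state circle is traversed coherently by the orientation. So all-crossing directions $\Phi$ with $C(\Phi)=E'$ correspond to orientations that
\begin{itemize}
\item give each state circle a coherent direction, and
\item make every non-smoothing corner a source/sink corner.
\end{itemize}
The "exactly one" clause is then automatic. At each vertex the two corner colors play distinct roles, and by Remark~\ref{rem:types-disjoint} a vertex cannot have all corners source/sink unless it is type-$t$. A type-$t$ vertex would make smoothing corners non-crossing, which is excluded.

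Next I would use the labels $\lambda$. By Lemma~\ref{lem:xy-angle}, a crossing corner has equal labels on its two edges. Hence on a coherently oriented state circle $C$ every edge carries the same label, call it $\lambda(C)$. Reversing the direction of $C$ swaps $x$ and $y$, so $\lambda(C)$ can be chosen freely circle by circle. By Lemma~\ref{lem:xy-angle} again, a non-smoothing corner is a source/sink corner iff its two edges carry different labels. Each non-smoothing corner at $v_e$ lies between two consecutive smoothing blocks $B_j,B_{j+1}$ (or $W_j,W_{j+1}$). Therefore the condition becomes $\lambda(C_j)\neq\lambda(C_{j+1})$ for all consecutive blocks around every medial vertex.

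The proof then closes with Lemma~\ref{lem:adjacent-blocks} and Proposition~\ref{prop:adj-bip}.

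For (1)$\Rightarrow$(2), take a proper 2-coloring $\chi$ of $\Gamma(H^{E'})$ and transport it to state circles via Theorem~\ref{thm:state-vertex}. Orient each state circle coherently in the direction for which $\lambda(C)=x$ iff $\chi(C)=1$. By Lemma~\ref{lem:adjacent-blocks}, consecutive blocks lie on circles whose vertices are adjacent in $H^{E'}$, so they get different labels. This gives an all-crossing $\Phi$ with $C(\Phi)=E'$.

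For (2)$\Rightarrow$(1), read off $\chi(C):=\lambda(C)$. I then need every edge of $\Gamma(H^{E'})$ to arise from a pair of consecutive blocks. Every edge of $\Gamma(H^{E'})$ is induced by a dart $d$, joining the vertices containing $d$ and $\alpha^{E'}(d)$. Lemma~\ref{lem:adjacent-blocks} exhibits exactly this pair, for $e(d)\notin E'$ via $B_j,B_{j+1}$ with $i_j=d$, and for $e(d)\in E'$ via $W_{j-1},W_j$ with $i_j=d$. Hence $\chi$ is proper and $H^{E'}$ is bipartite.

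The main obstacle is making rigorous the claim that a coherently oriented state circle carries a constant label. This requires checking that passing through a smoothing corner keeps the black face on the same side relative to the direction of travel. That is the content of Lemma~\ref{lem:xy-angle} applied to crossing corners, but care is needed when a state circle passes the same medial vertex several times, or when a single medial edge forms a loop. I would handle this edge by edge: the label is attached to edges, and consecutive edges along $C$ share a crossing corner. Loops at a vertex would be checked directly from the corner description in Lemma~\ref{lem:checkerboard}.
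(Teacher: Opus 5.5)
Your proposal is correct and follows essentially the same route as the paper: transport the 2-coloring to state circles via Theorem~\ref{thm:state-vertex}, use Lemma~\ref{lem:xy-angle} to turn ``smoothing corners crossing, remaining corners source/sink'' into ``each state circle monochromatic, consecutive blocks oppositely labelled,'' and close with Lemma~\ref{lem:adjacent-blocks}. The differences are cosmetic: in (1)$\Rightarrow$(2) the paper orients each medial edge directly from its circle's label rather than orienting circles coherently (the constant-label step you flag is, either way, just Lemma~\ref{lem:xy-angle} applied corner by corner), while in (2)$\Rightarrow$(1) you make explicit what the paper leaves implicit, namely that every edge of $\Gamma(H^{E'})$, induced by a dart $d$ and $\alpha^{E'}(d)$, comes from a pair of consecutive blocks.
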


\begin{proof}
\noindent(1)$\Rightarrow$(2). Assume that $H^{E'}$ is bipartite, and choose a bipartition
\[
V(H^{E'})=X\sqcup Y.
\]
By Theorem~\ref{thm:state-vertex}, each vertex corresponds to a unique state circle, so the state circles split into two classes: those corresponding to $X$, called $x$-circles, and those corresponding to $Y$, called $y$-circles. Every medial edge lies on a unique state circle, and therefore inherits a label $x$ or $y$.

Now orient each edge of the canonical checkerboard colored medial map $M(H)$ by the rule:
\begin{itemize}
  \item if the edge is labeled $x$, direct it so that the black face lies on its right;
  \item if the edge is labeled $y$, direct it so that the black face lies on its left.
\end{itemize}
This yields an orientation $\Phi$ satisfying
\[
\lambda(f)=x\Longleftrightarrow f\text{ lies on an }x\text{-circle},
\qquad
\lambda(f)=y\Longleftrightarrow f\text{ lies on a }y\text{-circle}.
\]

Take any hyperedge $e=(i_1\cdots i_k)$ and consider the corresponding medial vertex $v_e$.

\emph{Case A: $e\in E'$.} Then $v_e$ is white-smoothed. The two edges in each white smoothing block belong to the same state circle, hence they have the same $x/y$ label. On the other hand, by Lemma~\ref{lem:adjacent-blocks}(2), two adjacent white smoothing blocks correspond to two adjacent vertices of $H^{E'}$ along the hyperedge $e$; since $H^{E'}$ is bipartite, these two vertices must have opposite colors, and thus adjacent blocks must carry opposite labels. Therefore the labels on the white smoothing blocks around $v_e$ alternate cyclically as
\[
x,y,x,y,\dots.
\]
Hence the two edges in every white corner have the same label, whereas the two edges in every black corner have different labels. By Lemma~\ref{lem:xy-angle}, all white corners are crossing corners and all black corners are source/sink corners. Thus $v_e$ is a type-$c$ all-crossing vertex.

\emph{Case B: $e\notin E'$.} Then $v_e$ is black-smoothed. The same argument, using Lemma~\ref{lem:adjacent-blocks}(1), shows that the labels on the black smoothing blocks alternate, so every black corner is a crossing corner and every white corner is a source/sink corner. Hence $v_e$ is a type-$d$ all-crossing vertex.

Therefore $\Phi$ is an all-crossing direction, and
\[
e\in E'\Longleftrightarrow v_e\text{ is type-}c.
\]
So $E'=C(\Phi)$.

\noindent(2)$\Rightarrow$(1). Suppose that $M(H)$ admits an all-crossing direction $\Phi$ and $E'=C(\Phi)$. Let $\lambda$ be the induced $x/y$-labeling. Construct the state $S_{E'}$ by taking white smoothing for $e\in E'$ and black smoothing for $e\notin E'$. If $e\in E'$, then $v_e$ is a type-$c$ vertex, so all white corners are crossing corners; by Lemma~\ref{lem:xy-angle}, the two edges in each white smoothing block have the same label. If $e\notin E'$, then $v_e$ is a type-$d$ vertex, so all black corners are crossing corners; similarly, the two edges in each black smoothing block also have the same label. Thus, when one travels along any state circle, the $x/y$ label never changes at a smoothing block, and every state circle is monochromatic.

By Theorem~\ref{thm:state-vertex}, the state circles are in bijection with the vertices of $H^{E'}$. We may therefore color the vertices corresponding to $x$-circles with one color and those corresponding to $y$-circles with the other color.

It remains to verify that this is indeed a proper 2-coloring. Let $e$ be any hyperedge.
\begin{itemize}
  \item If $e\in E'$, then $v_e$ is type-$c$: the white smoothing blocks are monochromatic, while all black corners are source/sink corners. By Lemma~\ref{lem:xy-angle}, the two edges in each black corner have different labels, so adjacent white smoothing blocks must carry opposite labels. By Lemma~\ref{lem:adjacent-blocks}(2), these adjacent white smoothing blocks correspond to adjacent vertices in $H^{E'}$, and hence those vertices receive opposite colors.
  \item If $e\notin E'$, then $v_e$ is type-$d$: the black smoothing blocks are monochromatic, while all white corners are source/sink corners. Again by Lemma~\ref{lem:xy-angle}, adjacent black smoothing blocks must carry opposite labels. By Lemma~\ref{lem:adjacent-blocks}(1), the corresponding vertices are adjacent in $H^{E'}$, and therefore receive opposite colors as well.
\end{itemize}
Thus every edge connects vertices of opposite colors, and $H^{E'}$ is bipartite.
\end{proof}

\begin{remark}\label{rem:odd-from-proof}
The even-length obstruction in Corollary~\ref{cor:odd-obstruction} does not depend on Theorem~\ref{thm:bip-char}. Nevertheless, the proof above also reveals the same fact directly: at an all-crossing vertex, the labels on smoothing blocks must alternate as $x,y,x,y,\dots$ around the hyperedge, so the number of blocks must be even, and hence the corresponding hyperedge must have even length.
\end{remark}

\section{Further remarks and an example}\label{sec:further-remarks}

\begin{example}
We return to the hypermap $H_0=(\sigma,\alpha)$, where
\[
\sigma=(1\,8)(2\,10\,11)(3\,6)(4\,5\,7)(9\,12),
\qquad
\alpha=(1\,2\,3\,4)(5\,6\,7\,8)(9\,10)(11\,12).
\]
Write the four hyperedges as
\[
A:=(1,2,3,4),\qquad B:=(5,6,7,8),\qquad C:=(9,10),\qquad D:=(11,12).
\]
Then $A$ and $B$ are hyperedges of length $4$, while $C$ and $D$ have length $2$.

The corresponding vertices are
\[
(1,8),\qquad (2,10,11),\qquad (3,6),\qquad (4,5,7),\qquad (9,12).
\]

Using the partial-duality formula
\[
\sigma^{E'}=\alpha_{E'}^{-1}\sigma,
\]
one can compute directly the vertex-cycles of every partial dual $H_0^{E'}$ for $E'\subseteq\{A,B,C,D\}$. The calculation yields the following. Relevant figures can be found in the appendix.

\medskip
\noindent\textbf{(1) Eulerian partial duals.}
Exactly the following eight partial duals are Eulerian:
\[
H_0^{\{A\}},\quad
H_0^{\{A,B\}},\quad
H_0^{\{A,C\}},\quad
H_0^{\{A,D\}},\quad
H_0^{\{A,B,C\}},\quad
H_0^{\{A,B,D\}},\quad
H_0^{\{A,C,D\}},\quad
H_0^{\{A,B,C,D\}}.
\]
Equivalently,
\[
H_0^{E'} \text{ is Eulerian } \iff A\in E'.
\]
Hence, by Theorem~\ref{thm:eulerian-char}, for every subset $E'$ containing $A$ there exists a crossing-total direction $\Omega_{E'}$ of $M(H_0)$ such that
\[
E'=D(\Omega_{E'})\cup T',\qquad T'\subseteq T(\Omega_{E'}).
\]

\medskip
\noindent\textbf{(2) Bipartite partial duals.}
Exactly the following two partial duals are bipartite:
\[
H_0=H_0^{\varnothing},\qquad H_0^{\{C,D\}}.
\]
More explicitly:
\begin{itemize}
  \item when $E'=\varnothing$,
  \[
  \sigma^{\varnothing}=(1\,8)(2\,10\,11)(3\,6)(4\,5\,7)(9\,12),
  \]
  one may take the bipartition
  \[
  X=\{(1,8),(3,6),(9,12)\},
  \qquad
  Y=\{(2,10,11),(4,5,7)\};
  \]
  \item when $E'=\{C,D\}$,
  \[
  \sigma^{\{C,D\}}=(1\,8)(2\,9\,11)(3\,6)(4\,5\,7)(10\,12),
  \]
  one may take the bipartition
  \[
  X=\{(1,8),(3,6),(10,12)\},
  \qquad
  Y=\{(2,9,11),(4,5,7)\}.
  \]
\end{itemize}
Therefore, by Theorem~\ref{thm:bip-char}, there exist all-crossing directions $\Delta_0$ and $\Delta_1$ of $M(H_0)$ such that
\[
C(\Phi_0)=\varnothing,
\qquad
C(\Phi_1)=\{C,D\}.
\]

\medskip
\noindent\textbf{(3) Conclusion.}
Among the $16$ partial duals of this hypermap,
\[
\text{there are }8\text{ Eulerian partial duals},
\qquad
\text{and }2\text{ bipartite partial duals}.
\]
More precisely,
\[
H_0^{E'} \text{ is Eulerian } \iff A\in E',
\]
while
\[
H_0^{E'} \text{ is bipartite } \iff E'=\varnothing \text{ or } E'=\{C,D\}.
\]
This example shows that, in the hypermap setting, the distributions of Eulerian and bipartite partial duals can differ markedly from one another.
\end{example}

\begin{remark}\label{rem:ribbon}
When every hyperedge has length $2$, a hypermap reduces to a ribbon graph; in that case every medial vertex is $4$-valent, and all definitions and proofs in the present paper reduce to the results of Deng, Jin and Metsidik \cite{DengJinMetsidik2020}. Thus the present work may be viewed as a full extension of those two classical direction characterizations from ribbon graphs to orientable hypermaps with higher-valent medial vertices.
\end{remark}

\begin{remark}\label{rem:nonorientable}
We do not treat non-orientable hypermaps in this paper. The reason is that the $x/y$-labeling used in Theorem~\ref{thm:bip-char} depends on deciding whether the black face lies on the left or on the right of an edge, which requires a global orientation. By analogy with the non-orientable ribbon-graph case, one expects that some modified medial object would be needed in order to obtain a corresponding characterization \cite{DengJinMetsidik2020,EllisMonaghanMoffatt2013}.
\end{remark}

\begin{remark}\label{rem:literature}
The proof draws on three strands of literature: the basic theory and permutation models of hypermaps \cite{Walsh1975,JonesSingerman1978,Cori1975,GrossTucker1987,LandoZvonkin2004}; the extension of partial duality to hypermaps \cite{ChmutovVignes2022}; and the ribbon-graph characterizations of partial duals via medial-graph directions \cite{HuggettMoffatt2013,MetsidikJin2018,DengJinMetsidik2020}. To keep the paper self-contained, we import only those results that are genuinely needed and provide complete arguments again at the critical points.
\end{remark}

\section*{Declaration of competing interest}
We declare that we have no financial and personal relationships with other people or organizations that can inappropriately influence our work.

\section*{Acknowledgements}
This work is supported by Natural Science Foundation of Xinjiang (Grant Number: 2024D01A89) and National Natural Science Foundation of China (Grant Number: 11961070).

\section*{Data availability}
No data was used for the research described in the article.


\bibliographystyle{cas-model2-names}
\bibliography{main}

\newpage
\appendix
\section{Illustrations of Eulerian and bipartite partial duals}
\label{app:illustrations}
\subsection{All Eulerian partial duals: medial maps and partial dual diagrams}

The hypermap \(H_0\) has four hyperedges
\[
A=(1,2,3,4),\quad B=(5,6,7,8),\quad C=(9,10),\quad D=(11,12).
\]
By Theorem~5.3, crossing-total directions of \(M(H_0)\) detect Eulerian
partial duals through
\[
E'=D(\Omega)\cup T',\qquad T'\subseteq T(\Omega).
\]
In this example, the directions shown below give exactly the eight cases
\(A\in E'\). For each \(E'\), the left diagram is the corresponding
directed medial map, and the right diagram is the resulting Eulerian
partial dual \(H^{E'}_0\).

\medskip
\begin{figure}[h]
  \centering
  \begin{subfigure}[t]{0.351\textwidth}
    \centering
    \includegraphics[width=\textwidth]{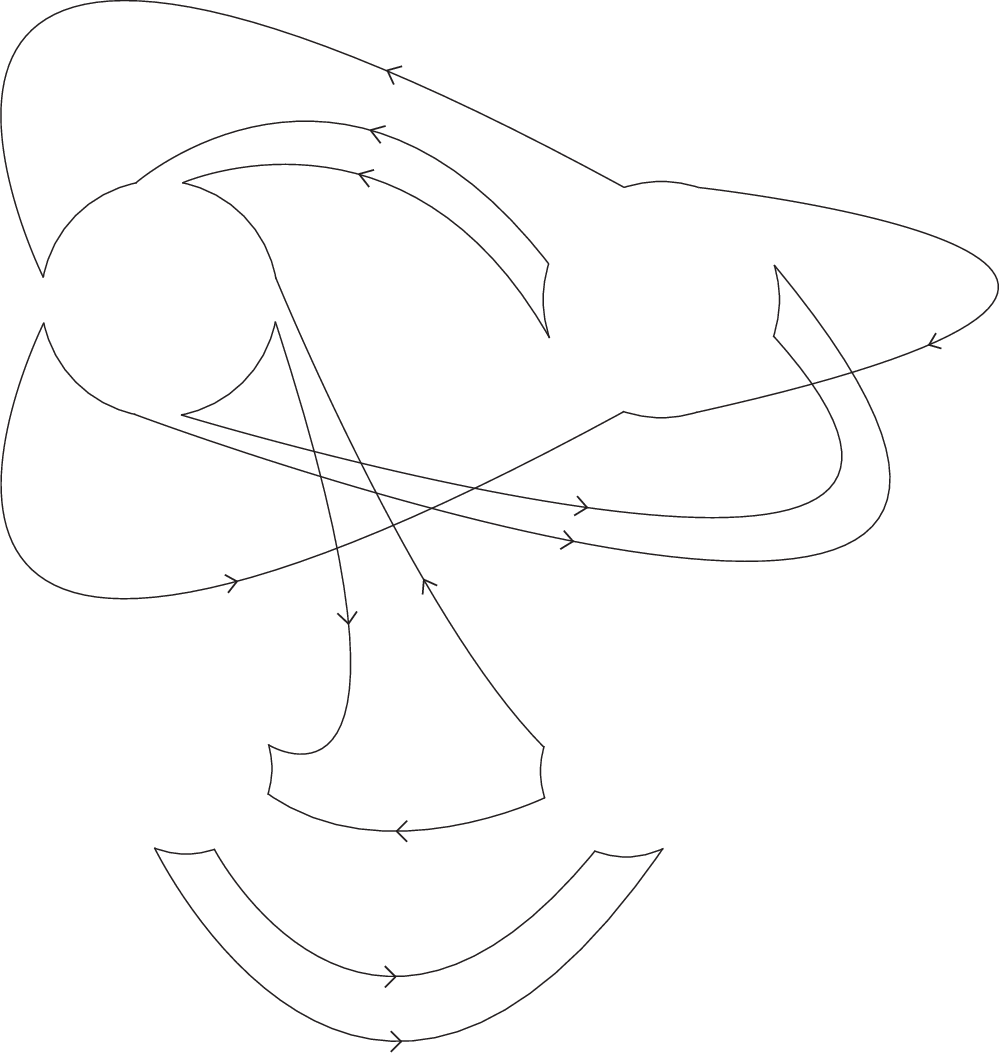}
    \caption{Medial map with crossing-total direction $\Omega_{\{A\}}$}
    \label{fig:eulerian_A_medial}
  \end{subfigure}
  \hspace{2cm}
  \begin{subfigure}[t]{0.39\textwidth}
    \centering
    \includegraphics[width=\textwidth]{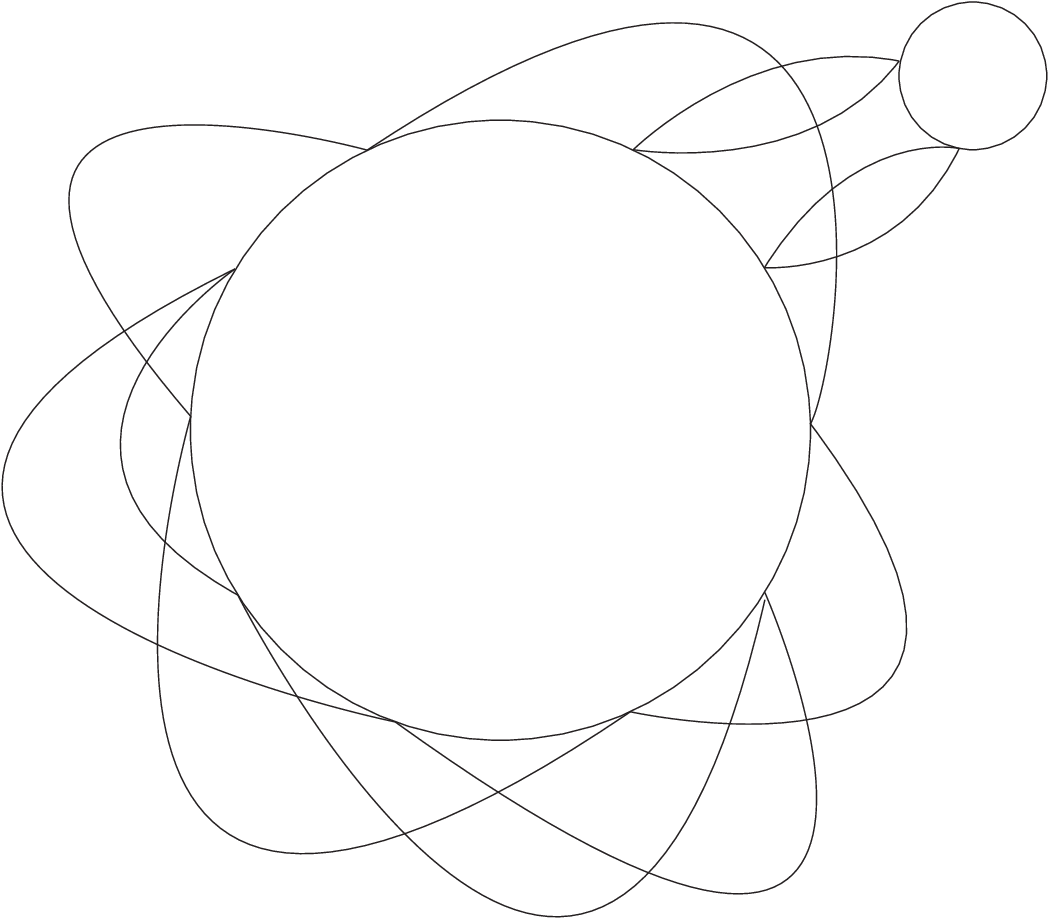}
    \caption{Partial dual $H_0^{\{A\}}$ }
    \label{fig:eulerian_A_dual}
  \end{subfigure}
\put(-395,134){\footnotesize$1^-$}
\put(-384,134){\footnotesize$1^+$}
\put(-376,125){\footnotesize$2^-$}
\put(-376,115){\footnotesize$2^+$}
\put(-383,107){\footnotesize$3^-$}
\put(-394,107){\footnotesize$3^+$}
\put(-401,116){\footnotesize$4^-$}
\put(-401,125){\footnotesize$4^+$}
\put(-310,134){\footnotesize$5^-$}
\put(-297,134){\footnotesize$5^+$}
\put(-292,126){\footnotesize$6^-$}
\put(-292,115){\footnotesize$6^+$}
\put(-297,107){\footnotesize$7^-$}
\put(-310,107){\footnotesize$7^+$}
\put(-319,116){\footnotesize$8^-$}
\put(-319,126){\footnotesize$8^+$}
\put(-378,34){\tiny$9^-$}
\put(-387,34){\tiny$9^+$}
\put(-380,50){\tiny$10^-$}
\put(-380,42){\tiny$10^+$}
\put(-321,42){\tiny$11^-$}
\put(-321,50){\tiny$11^+$}
\put(-303,35){\tiny$12^-$}
\put(-317,35){\tiny$12^+$}
\put(-120,125){\footnotesize1}
\put(-135,110){\footnotesize8}
\put(-143,86){\footnotesize4}
\put(-134,55){\footnotesize5}
\put(-113,36){\footnotesize7}
\put(-80,42){\footnotesize3}
\put(-60,55){\footnotesize6}
\put(-52,77){\footnotesize2}
\put(-62,106){\footnotesize10}
\put(-83,128){\footnotesize11}
\put(-14,137){\footnotesize9}
\put(-25,150){\footnotesize12}
  \caption{Eulerian case $E'=\{A\}$}
  \label{fig:eulerian_A}
\end{figure}

\begin{figure}[h]
  \centering
  \begin{subfigure}[t]{0.351\textwidth}
    \centering
    \includegraphics[width=\textwidth]{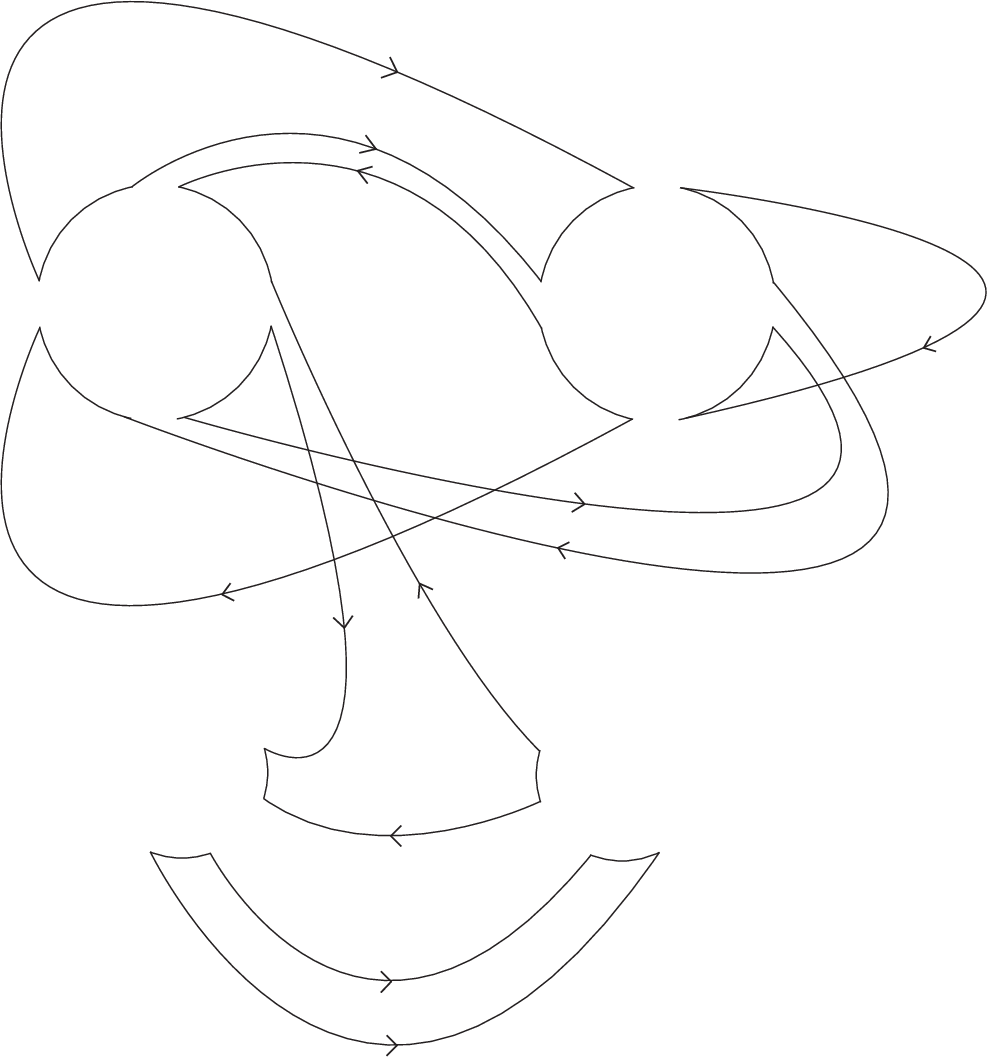}
    \caption{Medial map direction $\Omega_{\{A,B\}}$}
    \label{fig:eulerian_AB_medial}
  \end{subfigure}
    \hspace{2cm}
  \begin{subfigure}[t]{0.39\textwidth}
    \centering
    \includegraphics[width=\textwidth]{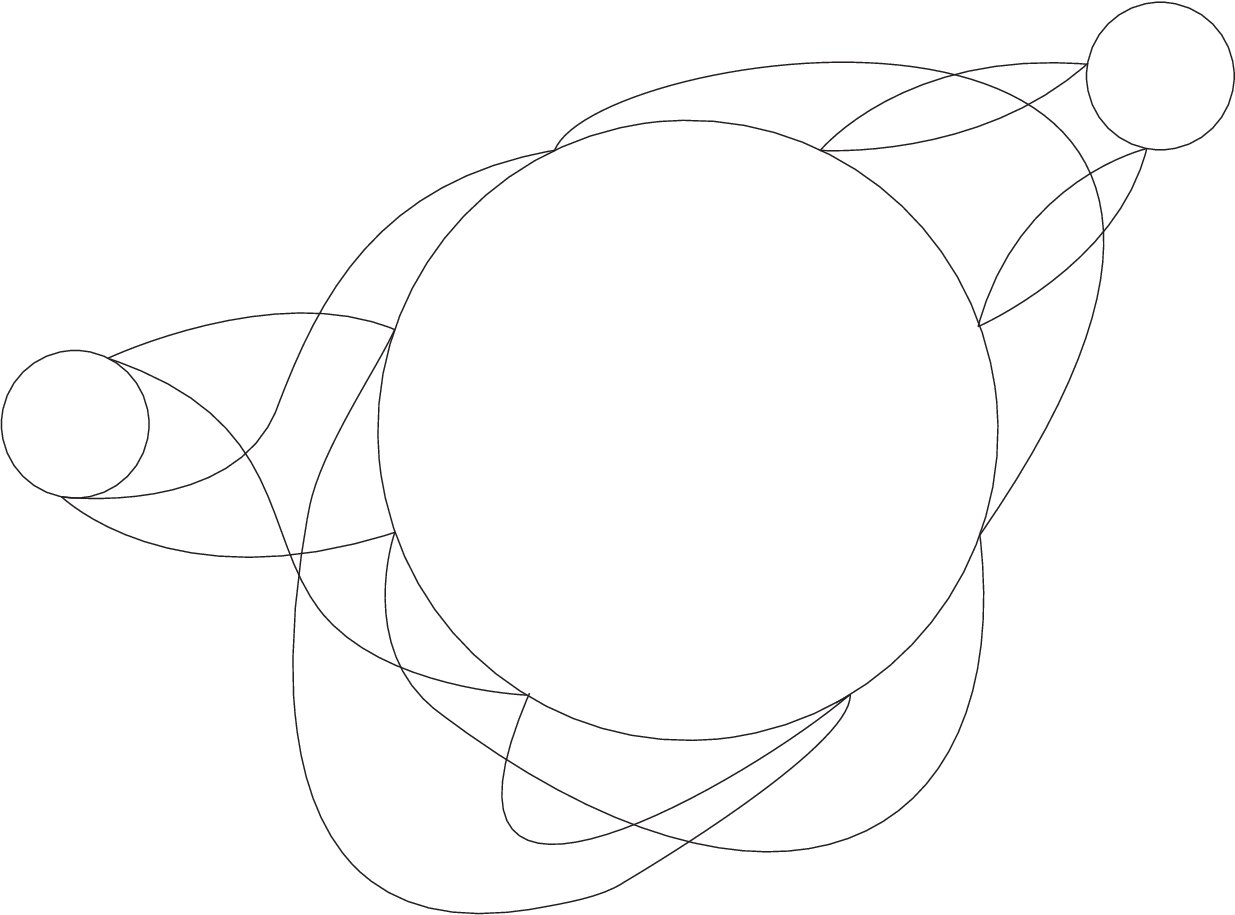}
    \caption{Partial dual $H_0^{\{A,B\}}$}
    \label{fig:eulerian_AB_dual}
  \end{subfigure}
\put(-395,136){\footnotesize$1^-$}
\put(-384,136){\footnotesize$1^+$}
\put(-376,125){\footnotesize$2^-$}
\put(-376,115){\footnotesize$2^+$}
\put(-383,107){\footnotesize$3^-$}
\put(-394,107){\footnotesize$3^+$}
\put(-401,116){\footnotesize$4^-$}
\put(-401,125){\footnotesize$4^+$}
\put(-310,136){\footnotesize$5^-$}
\put(-297,136){\footnotesize$5^+$}
\put(-292,126){\footnotesize$6^-$}
\put(-292,115){\footnotesize$6^+$}
\put(-297,107){\footnotesize$7^-$}
\put(-310,107){\footnotesize$7^+$}
\put(-319,116){\footnotesize$8^-$}
\put(-319,126){\footnotesize$8^+$}
\put(-378,34){\tiny$9^-$}
\put(-387,34){\tiny$9^+$}
\put(-380,50){\tiny$10^-$}
\put(-380,42){\tiny$10^+$}
\put(-319,42){\tiny$11^-$}
\put(-319,50){\tiny$11^+$}
\put(-303,35){\tiny$12^-$}
\put(-317,35){\tiny$12^+$}
\put(-100,104){\footnotesize1}
\put(-119,84){\footnotesize7}
\put(-120,51){\footnotesize3}
\put(-103,35){\footnotesize5}
\put(-64,34){\footnotesize6}
\put(-175,62){\footnotesize4}
\put(-47,56){\footnotesize2}
\put(-52,86){\footnotesize10}
\put(-70,105){\footnotesize11}
\put(-170,74){\footnotesize8}
\put(-10,117){\footnotesize9}
\put(-22,124){\footnotesize12}
  \caption{Eulerian case $E'=\{A,B\}$}
  \label{fig:eulerian_AB}
\end{figure}

\begin{figure}[h]
  \centering
  \begin{subfigure}[t]{0.351\textwidth}
    \centering
    \includegraphics[width=\textwidth]{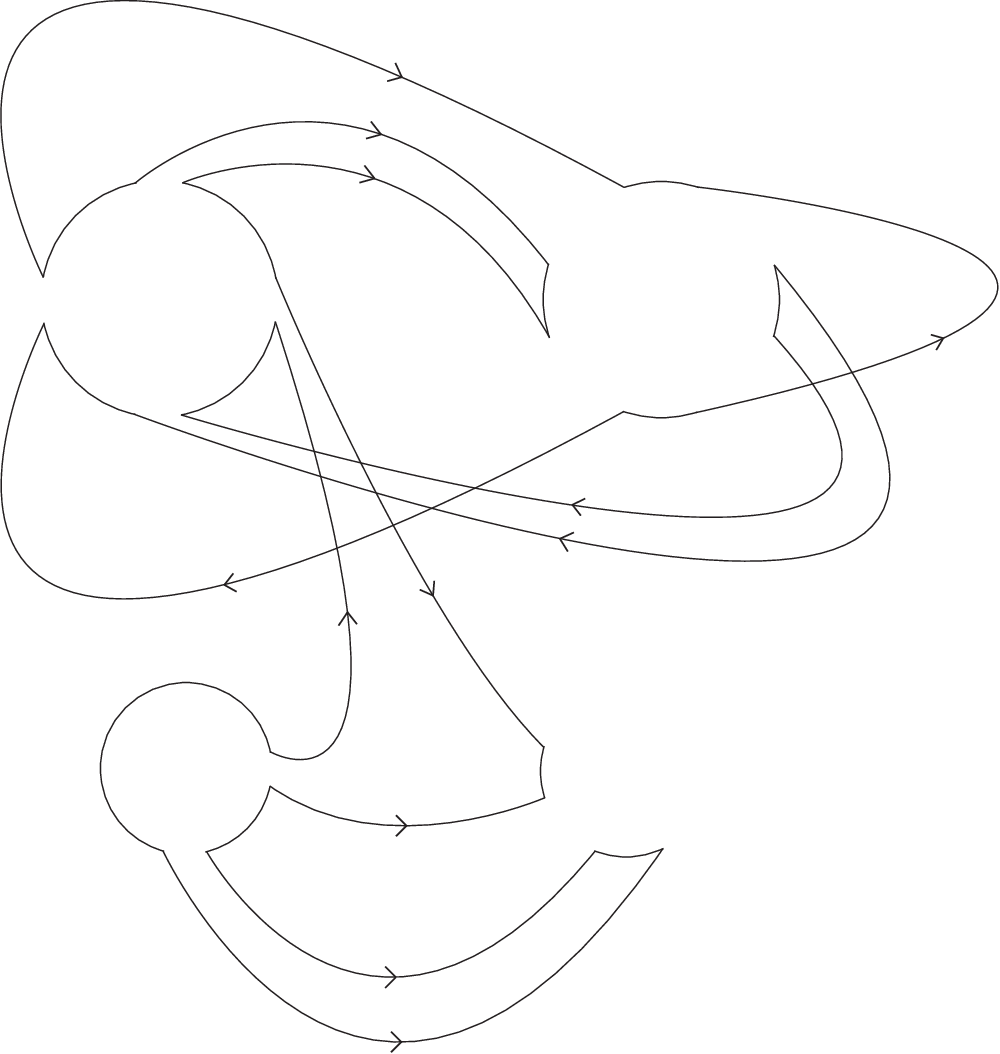}
    \caption{Medial map direction $\Omega_{\{A,C\}}$}
    \label{fig:eulerian_AC_medial}
  \end{subfigure}
  \hspace{2cm}
  \begin{subfigure}[t]{0.39\textwidth}
    \centering
    \includegraphics[width=\textwidth]{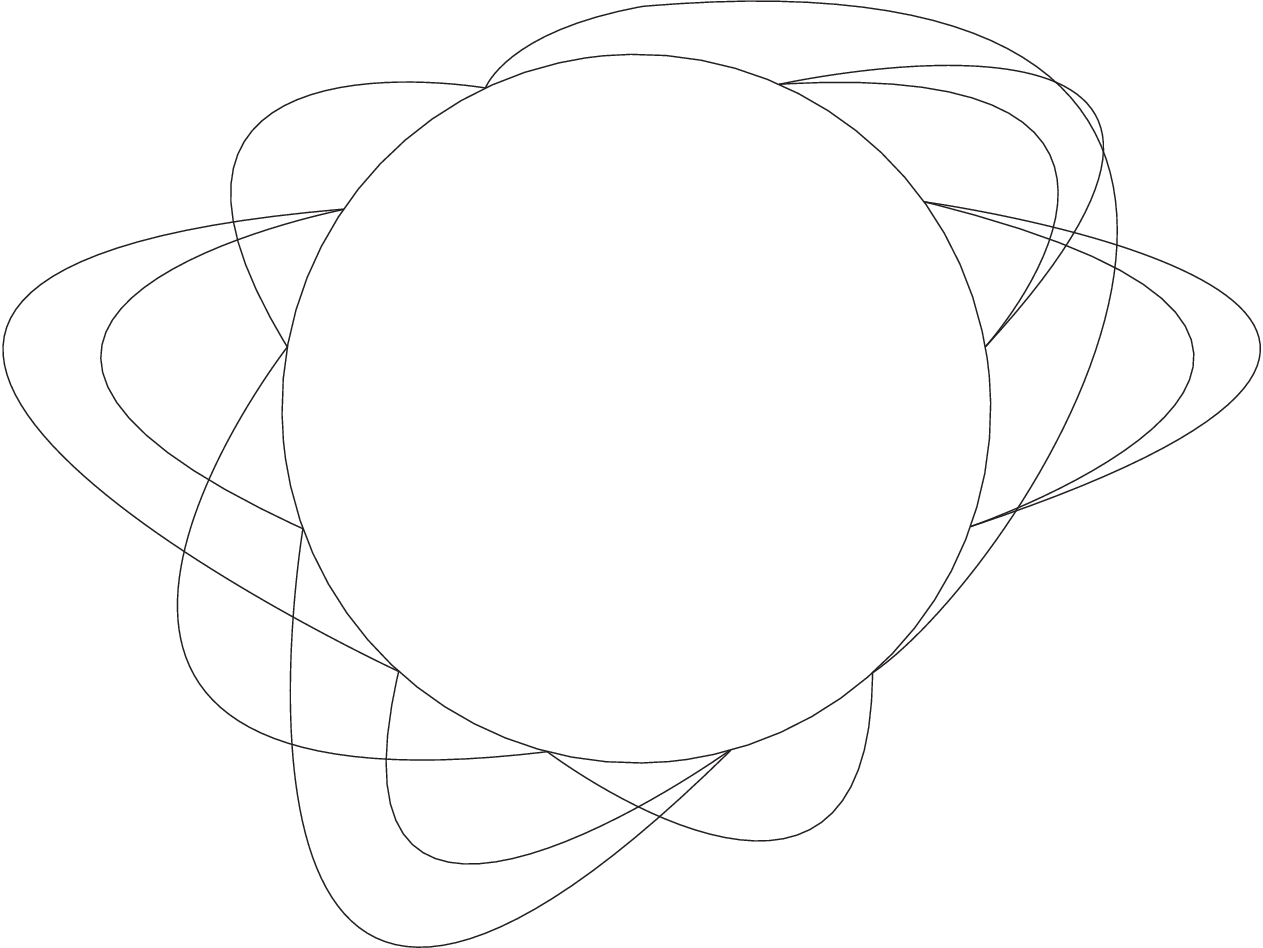}
    \caption{Partial dual $H_0^{\{A,C\}}$}
    \label{fig:eulerian_AC_dual}
  \end{subfigure}
\put(-395,136){\footnotesize$1^-$}
\put(-384,136){\footnotesize$1^+$}
\put(-376,125){\footnotesize$2^-$}
\put(-376,115){\footnotesize$2^+$}
\put(-383,107){\footnotesize$3^-$}
\put(-394,107){\footnotesize$3^+$}
\put(-401,116){\footnotesize$4^-$}
\put(-401,125){\footnotesize$4^+$}
\put(-310,136){\footnotesize$5^-$}
\put(-297,136){\footnotesize$5^+$}
\put(-292,126){\footnotesize$6^-$}
\put(-292,115){\footnotesize$6^+$}
\put(-297,107){\footnotesize$7^-$}
\put(-310,107){\footnotesize$7^+$}
\put(-319,116){\footnotesize$8^-$}
\put(-319,126){\footnotesize$8^+$}
\put(-378,34){\tiny$9^-$}
\put(-387,34){\tiny$9^+$}
\put(-380,50){\tiny$10^-$}
\put(-380,42){\tiny$10^+$}
\put(-319,42){\tiny$11^-$}
\put(-319,50){\tiny$11^+$}
\put(-303,35){\tiny$12^-$}
\put(-317,35){\tiny$12^+$}
\put(-116,113){\footnotesize1}
\put(-131,100){\footnotesize8}
\put(-138,82){\footnotesize4}
\put(-134,59){\footnotesize5}
\put(-119,41){\footnotesize7}
\put(-100,32){\footnotesize3}
\put(-80,32){\footnotesize6}
\put(-64,40){\footnotesize2}
\put(-62,102){\footnotesize10}
\put(-80,114){\footnotesize11}
\put(-55,56){\footnotesize9}
\put(-55,80){\footnotesize12}
  \caption{Eulerian case $E'=\{A,C\}$}
  \label{fig:eulerian_AC}
\end{figure}

\begin{figure}[h]
  \centering
  \begin{subfigure}[t]{0.351\textwidth}
    \centering
    \includegraphics[width=\textwidth]{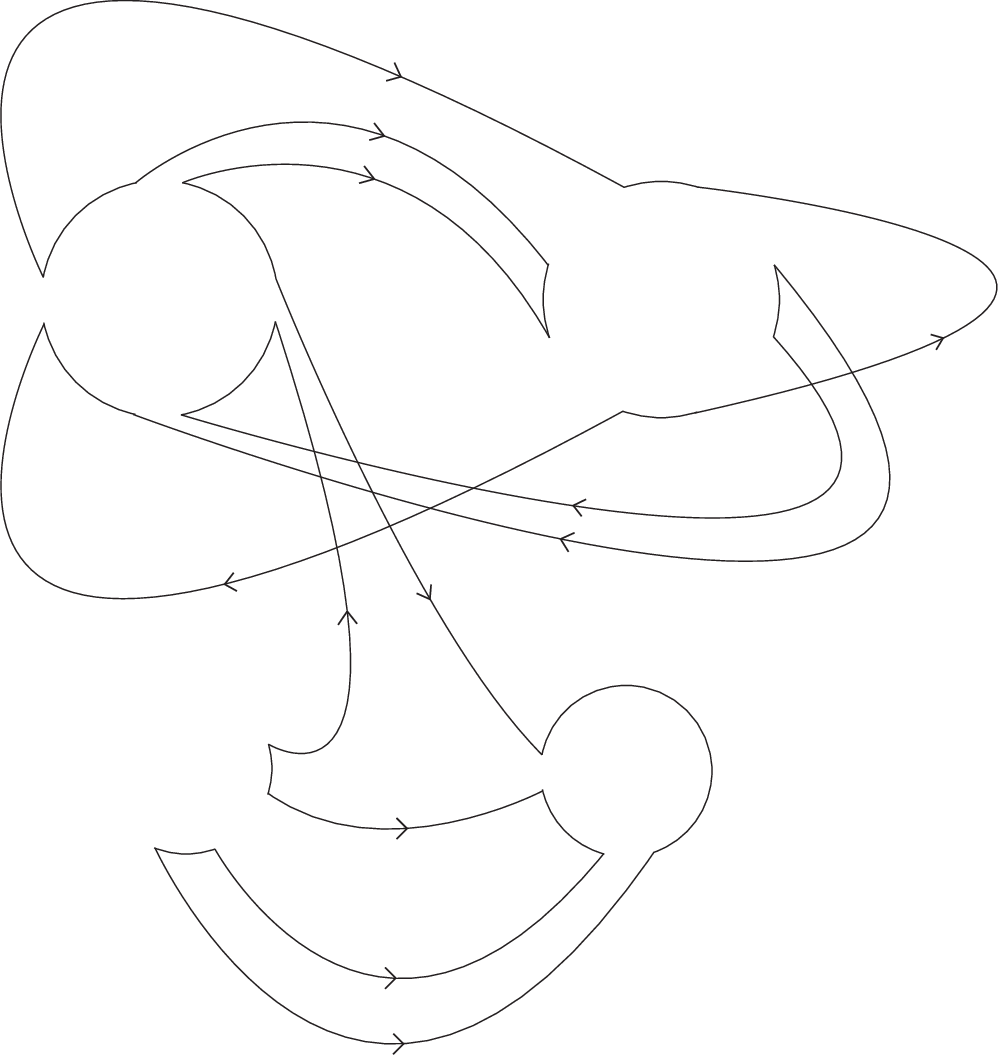}
    \caption{Medial map direction $\Omega_{\{A,D\}}$}
    \label{fig:eulerian_AD_medial}
  \end{subfigure}
  \hspace{2cm}
  \begin{subfigure}[t]{0.39\textwidth}
    \centering
    \includegraphics[width=\textwidth]{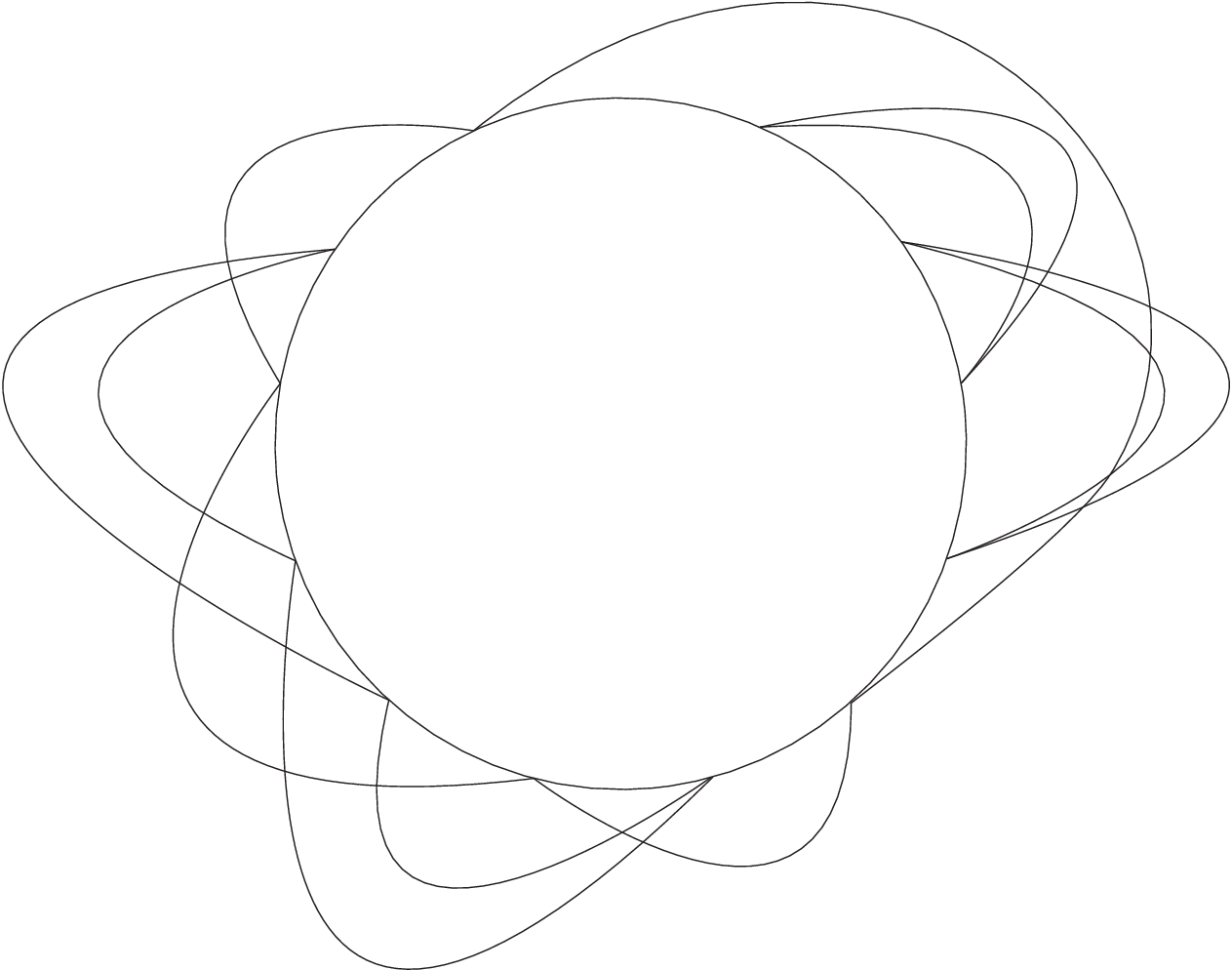}
    \caption{Partial dual $H_0^{\{A,D\}}$}
    \label{fig:eulerian_AD_dual}
  \end{subfigure}
\put(-395,136){\footnotesize$1^-$}
\put(-384,136){\footnotesize$1^+$}
\put(-376,125){\footnotesize$2^-$}
\put(-376,115){\footnotesize$2^+$}
\put(-383,107){\footnotesize$3^-$}
\put(-394,107){\footnotesize$3^+$}
\put(-401,116){\footnotesize$4^-$}
\put(-401,125){\footnotesize$4^+$}
\put(-310,136){\footnotesize$5^-$}
\put(-297,136){\footnotesize$5^+$}
\put(-292,126){\footnotesize$6^-$}
\put(-292,115){\footnotesize$6^+$}
\put(-297,107){\footnotesize$7^-$}
\put(-310,107){\footnotesize$7^+$}
\put(-319,116){\footnotesize$8^-$}
\put(-319,126){\footnotesize$8^+$}
\put(-378,34){\tiny$9^-$}
\put(-387,34){\tiny$9^+$}
\put(-380,50){\tiny$10^-$}
\put(-380,42){\tiny$10^+$}
\put(-319,42){\tiny$11^-$}
\put(-319,50){\tiny$11^+$}
\put(-303,35){\tiny$12^-$}
\put(-317,35){\tiny$12^+$}
\put(-116,113){\footnotesize1}
\put(-131,100){\footnotesize8}
\put(-138,82){\footnotesize4}
\put(-134,59){\footnotesize5}
\put(-119,41){\footnotesize7}
\put(-100,32){\footnotesize3}
\put(-80,32){\footnotesize6}
\put(-64,40){\footnotesize2}
\put(-55,56){\footnotesize10}
\put(-55,80){\footnotesize12}
\put(-77,116){\footnotesize9}
\put(-62,102){\footnotesize11}
  \caption{Eulerian case $E'=\{A,D\}$}
  \label{fig:eulerian_AD}
\end{figure}

\begin{figure}[h]
  \centering
  \begin{subfigure}[t]{0.350\textwidth}
    \centering
    \includegraphics[width=\textwidth]{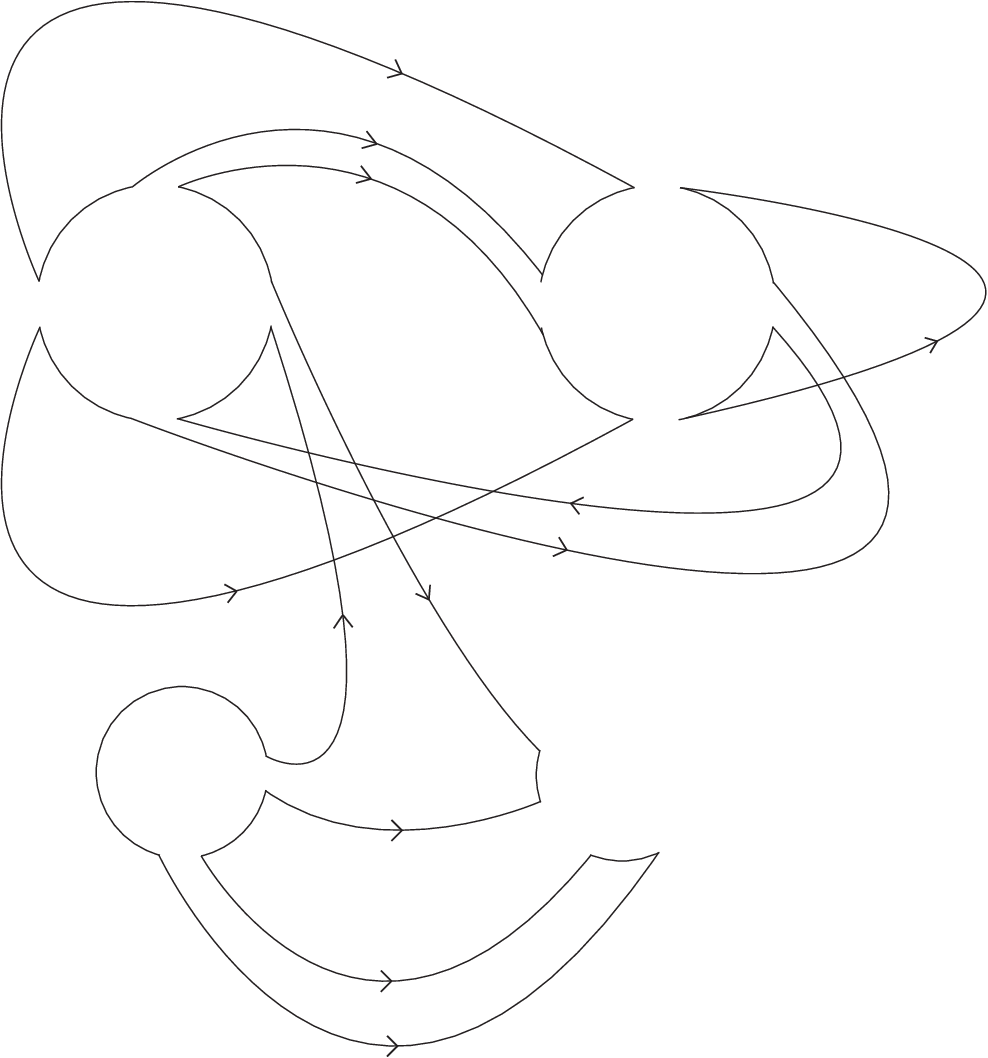}
    \caption{Medial map direction $\Omega_{\{A,B,C\}}$}
    \label{fig:eulerian_ABC_medial}
  \end{subfigure}
  \hspace{2cm}
  \begin{subfigure}[t]{0.39\textwidth}
    \centering
    \includegraphics[width=\textwidth]{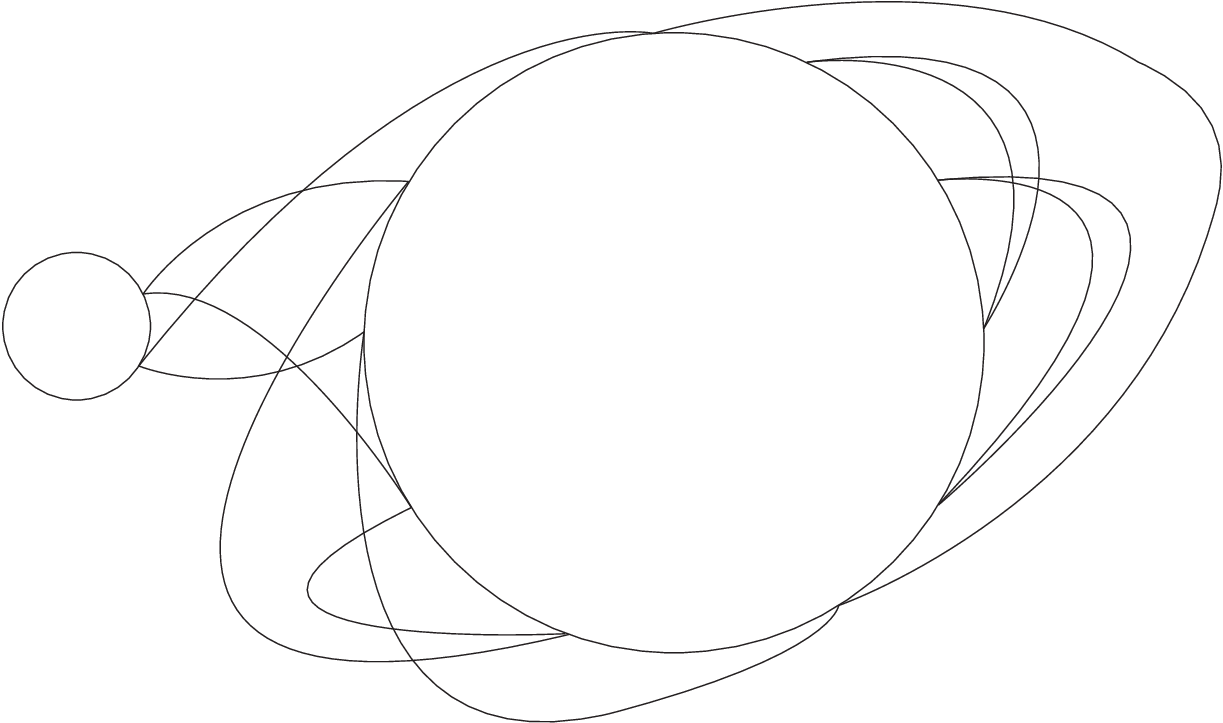}
    \caption{Partial dual $H_0^{\{A,B,C\}}$}
    \label{fig:eulerian_ABC_dual}
  \end{subfigure}
\put(-395,136){\footnotesize$1^-$}
\put(-384,136){\footnotesize$1^+$}
\put(-376,125){\footnotesize$2^-$}
\put(-376,115){\footnotesize$2^+$}
\put(-383,107){\footnotesize$3^-$}
\put(-394,107){\footnotesize$3^+$}
\put(-401,116){\footnotesize$4^-$}
\put(-401,125){\footnotesize$4^+$}
\put(-310,136){\footnotesize$5^-$}
\put(-297,136){\footnotesize$5^+$}
\put(-292,126){\footnotesize$6^-$}
\put(-292,115){\footnotesize$6^+$}
\put(-297,107){\footnotesize$7^-$}
\put(-310,107){\footnotesize$7^+$}
\put(-319,116){\footnotesize$8^-$}
\put(-319,126){\footnotesize$8^+$}
\put(-378,34){\tiny$9^-$}
\put(-387,34){\tiny$9^+$}
\put(-380,50){\tiny$10^-$}
\put(-380,42){\tiny$10^+$}
\put(-319,42){\tiny$11^-$}
\put(-319,50){\tiny$11^+$}
\put(-303,35){\tiny$12^-$}
\put(-317,35){\tiny$12^+$}
\put(-90,92){\footnotesize1}
\put(-120,75){\footnotesize7}
\put(-123,56){\footnotesize3}
\put(-116,32){\footnotesize5}
\put(-97,18){\footnotesize6}
\put(-66,18){\footnotesize2}
\put(-50,30){\footnotesize9}
\put(-52,56){\footnotesize12}
\put(-55,75){\footnotesize10}
\put(-67,88){\footnotesize11}
\put(-170,50){\footnotesize4}
\put(-170,62){\footnotesize8}
  \caption{Eulerian case $E'=\{A,B,C\}$}
  \label{fig:eulerian_ABC}
\end{figure}

\begin{figure}[h]
  \centering
  \begin{subfigure}[t]{0.347\textwidth}
    \centering
    \includegraphics[width=\textwidth]{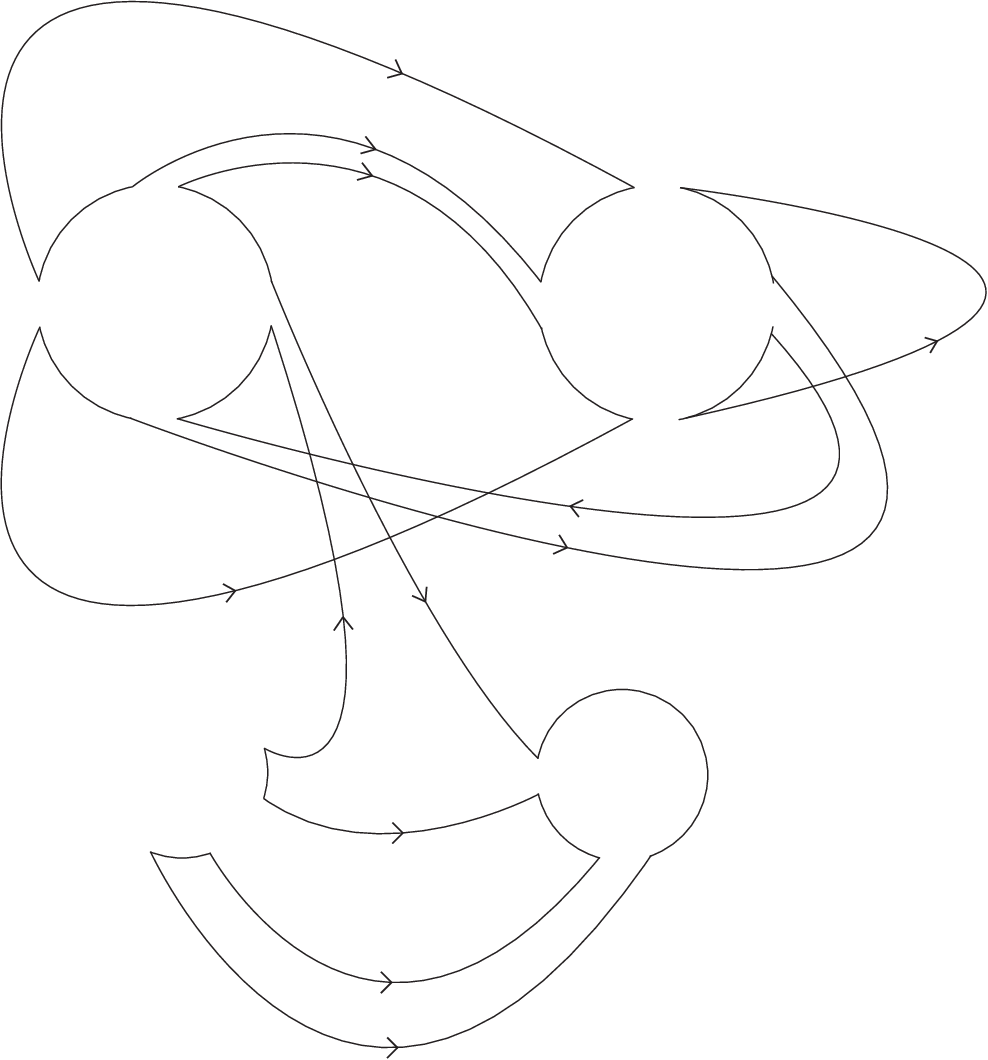}
    \caption{Medial map direction $\Omega_{\{A,B,D\}}$}
    \label{fig:eulerian_ABD_medial}
  \end{subfigure}
  \hspace{2cm}
  \begin{subfigure}[t]{0.39\textwidth}
    \centering
    \includegraphics[width=\textwidth]{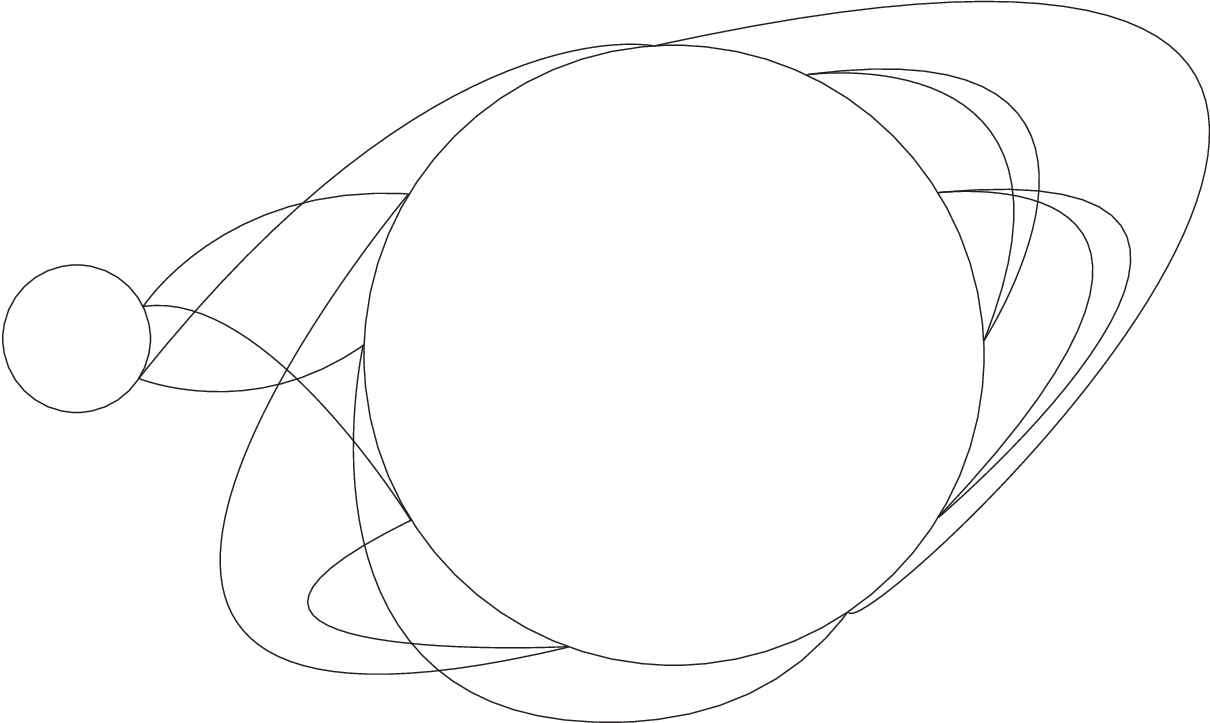}
    \caption{Partial dual $H_0^{\{A,B,D\}}$}
    \label{fig:eulerian_ABD_dual}
  \end{subfigure}
\put(-393,136){\footnotesize$1^-$}
\put(-382,136){\footnotesize$1^+$}
\put(-376,125){\footnotesize$2^-$}
\put(-376,115){\footnotesize$2^+$}
\put(-383,107){\footnotesize$3^-$}
\put(-394,107){\footnotesize$3^+$}
\put(-401,116){\footnotesize$4^-$}
\put(-401,125){\footnotesize$4^+$}
\put(-310,136){\footnotesize$5^-$}
\put(-297,136){\footnotesize$5^+$}
\put(-292,126){\footnotesize$6^-$}
\put(-292,115){\footnotesize$6^+$}
\put(-297,107){\footnotesize$7^-$}
\put(-310,107){\footnotesize$7^+$}
\put(-319,116){\footnotesize$8^-$}
\put(-319,126){\footnotesize$8^+$}
\put(-378,34){\tiny$9^-$}
\put(-387,34){\tiny$9^+$}
\put(-378,50){\tiny$10^-$}
\put(-378,42){\tiny$10^+$}
\put(-319,42){\tiny$11^-$}
\put(-319,50){\tiny$11^+$}
\put(-303,35){\tiny$12^-$}
\put(-317,35){\tiny$12^+$}
\put(-90,92){\footnotesize1}
\put(-120,75){\footnotesize7}
\put(-123,56){\footnotesize3}
\put(-119,32){\footnotesize5}
\put(-100,16){\footnotesize6}
\put(-62,18){\footnotesize2}
\put(-53,30){\footnotesize10}
\put(-51,56){\footnotesize12}
\put(-52,75){\footnotesize9}
\put(-67,88){\footnotesize11}
\put(-170,49){\footnotesize4}
\put(-170,61){\footnotesize8}
  \caption{Eulerian case $E'=\{A,B,D\}$}
  \label{fig:eulerian_ABD}
\end{figure}

\begin{figure}[h]
  \centering
  \begin{subfigure}[t]{0.347\textwidth}
    \centering
    \includegraphics[width=\textwidth]{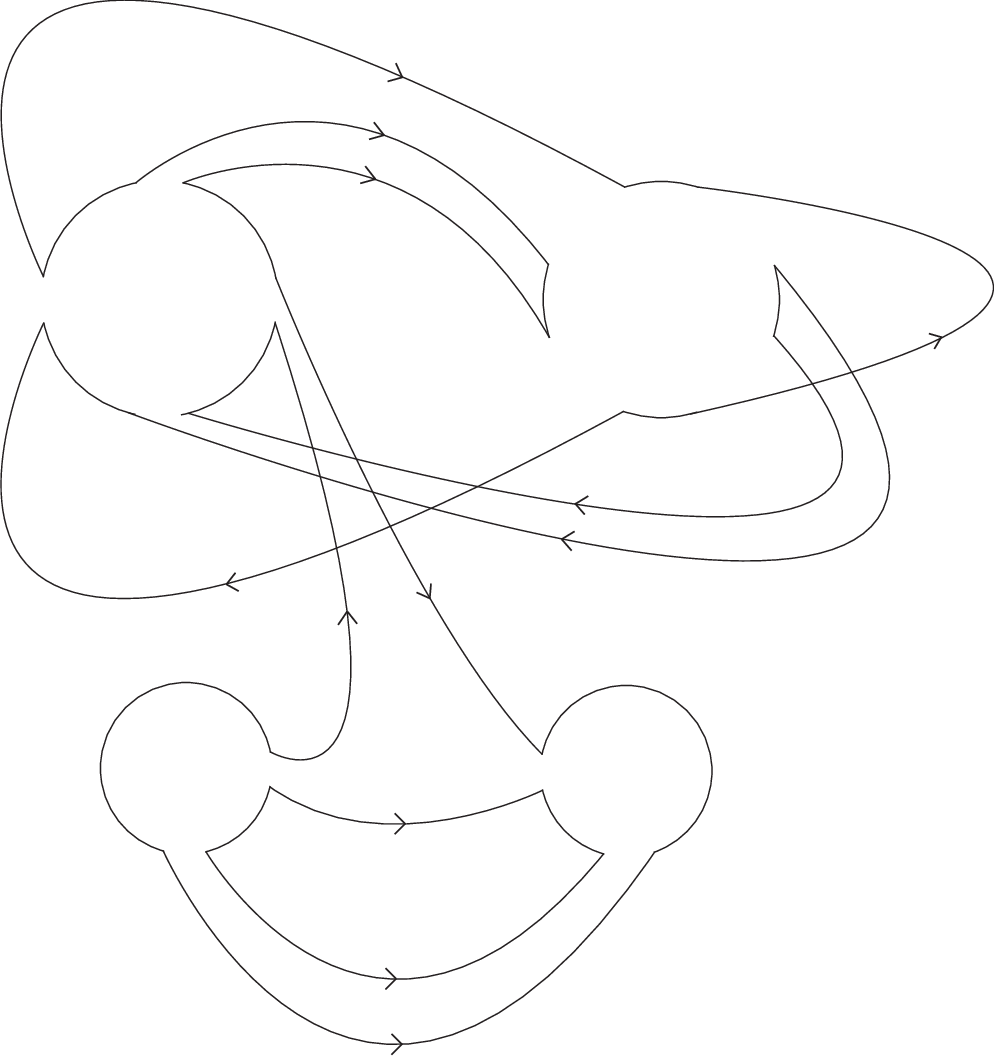}
    \caption{Medial map direction $\Omega_{\{A,C,D\}}$}
    \label{fig:eulerian_ACD_medial}
  \end{subfigure}
  \hspace{2cm}
  \begin{subfigure}[t]{0.39\textwidth}
    \centering
    \includegraphics[width=\textwidth]{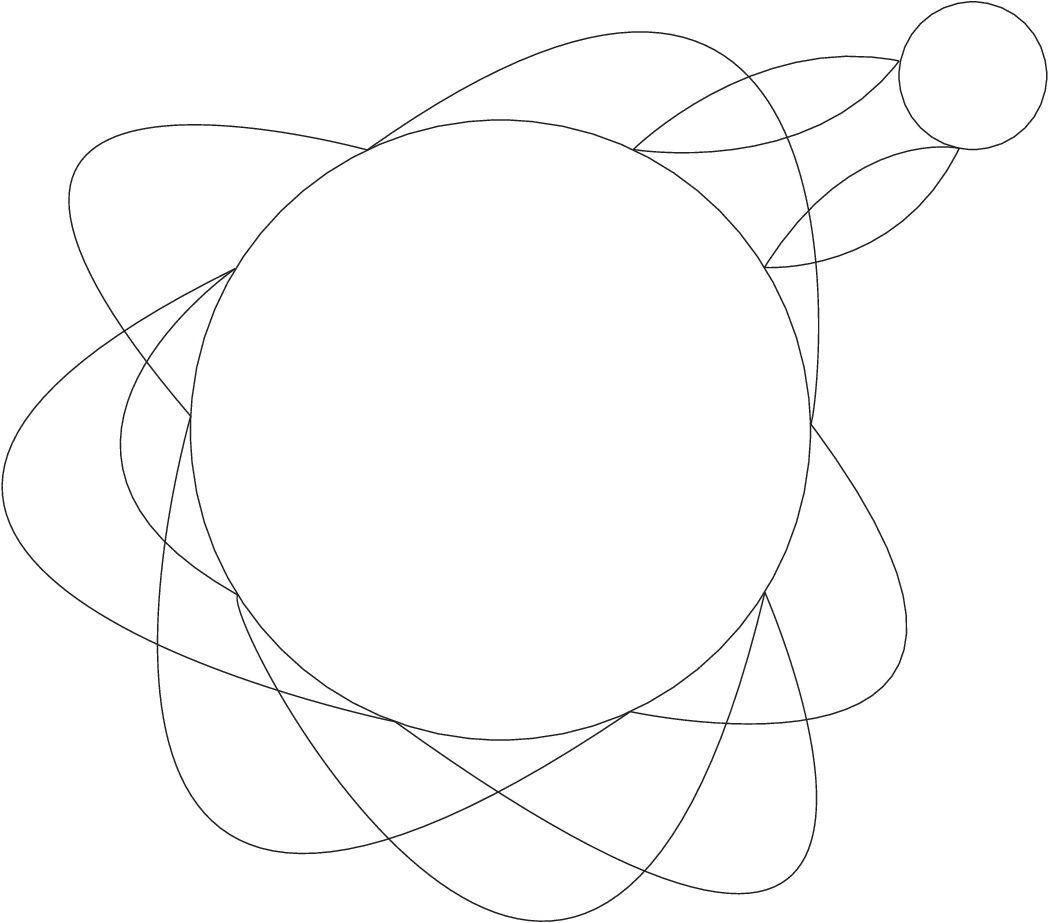}
    \caption{Partial dual $H_0^{\{A,C,D\}}$}
    \label{fig:eulerian_ACD_dual}
  \end{subfigure}
\put(-393,136){\footnotesize$1^-$}
\put(-382,136){\footnotesize$1^+$}
\put(-376,125){\footnotesize$2^-$}
\put(-376,115){\footnotesize$2^+$}
\put(-383,107){\footnotesize$3^-$}
\put(-394,107){\footnotesize$3^+$}
\put(-401,116){\footnotesize$4^-$}
\put(-401,125){\footnotesize$4^+$}
\put(-310,134){\footnotesize$5^-$}
\put(-297,134){\footnotesize$5^+$}
\put(-292,126){\footnotesize$6^-$}
\put(-292,115){\footnotesize$6^+$}
\put(-297,107){\footnotesize$7^-$}
\put(-310,107){\footnotesize$7^+$}
\put(-319,116){\footnotesize$8^-$}
\put(-319,126){\footnotesize$8^+$}
\put(-378,34){\tiny$9^-$}
\put(-387,34){\tiny$9^+$}
\put(-378,50){\tiny$10^-$}
\put(-378,42){\tiny$10^+$}
\put(-319,42){\tiny$11^-$}
\put(-319,50){\tiny$11^+$}
\put(-303,35){\tiny$12^-$}
\put(-317,35){\tiny$12^+$}
\put(-120,125){\footnotesize1}
\put(-135,110){\footnotesize8}
\put(-145,86){\footnotesize4}
\put(-133,55){\footnotesize5}
\put(-113,36){\footnotesize7}
\put(-80,39){\footnotesize3}
\put(-60,55){\footnotesize6}
\put(-50,79){\footnotesize2}
\put(-62,106){\footnotesize9}
\put(-83,125){\footnotesize11}
\put(-14,137){\footnotesize10}
\put(-25,147){\footnotesize12}
  \caption{Eulerian case $E'=\{A,C,D\}$}
  \label{fig:eulerian_ACD}
\end{figure}

\begin{figure}[h]
  \centering
  \begin{subfigure}[t]{0.347\textwidth}
    \centering
    \includegraphics[width=\textwidth]{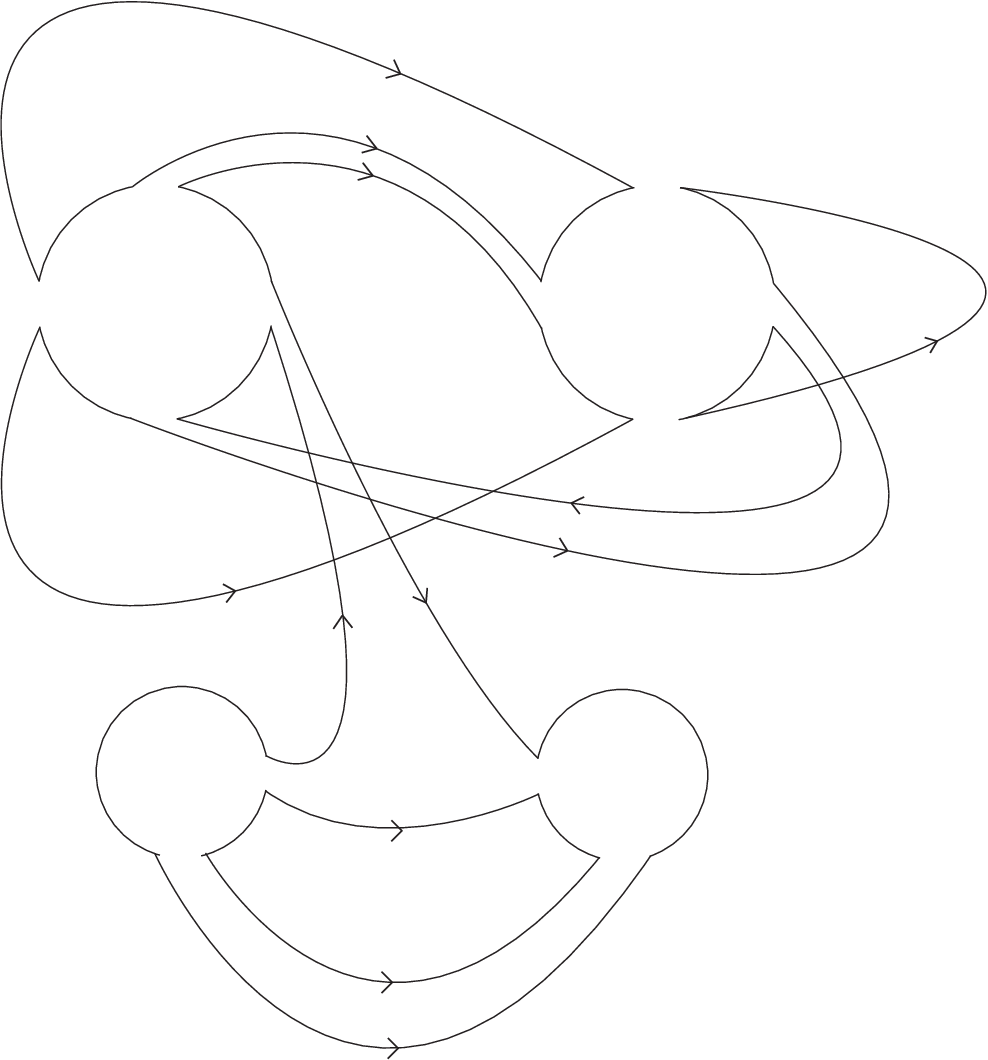}
    \caption{Medial map direction $\Omega_{\{A,B,C,D\}}$}
    \label{fig:eulerian_ABCD_medial}
  \end{subfigure}
  \hspace{2cm}
  \begin{subfigure}[t]{0.39\textwidth}
    \centering
    \includegraphics[width=\textwidth]{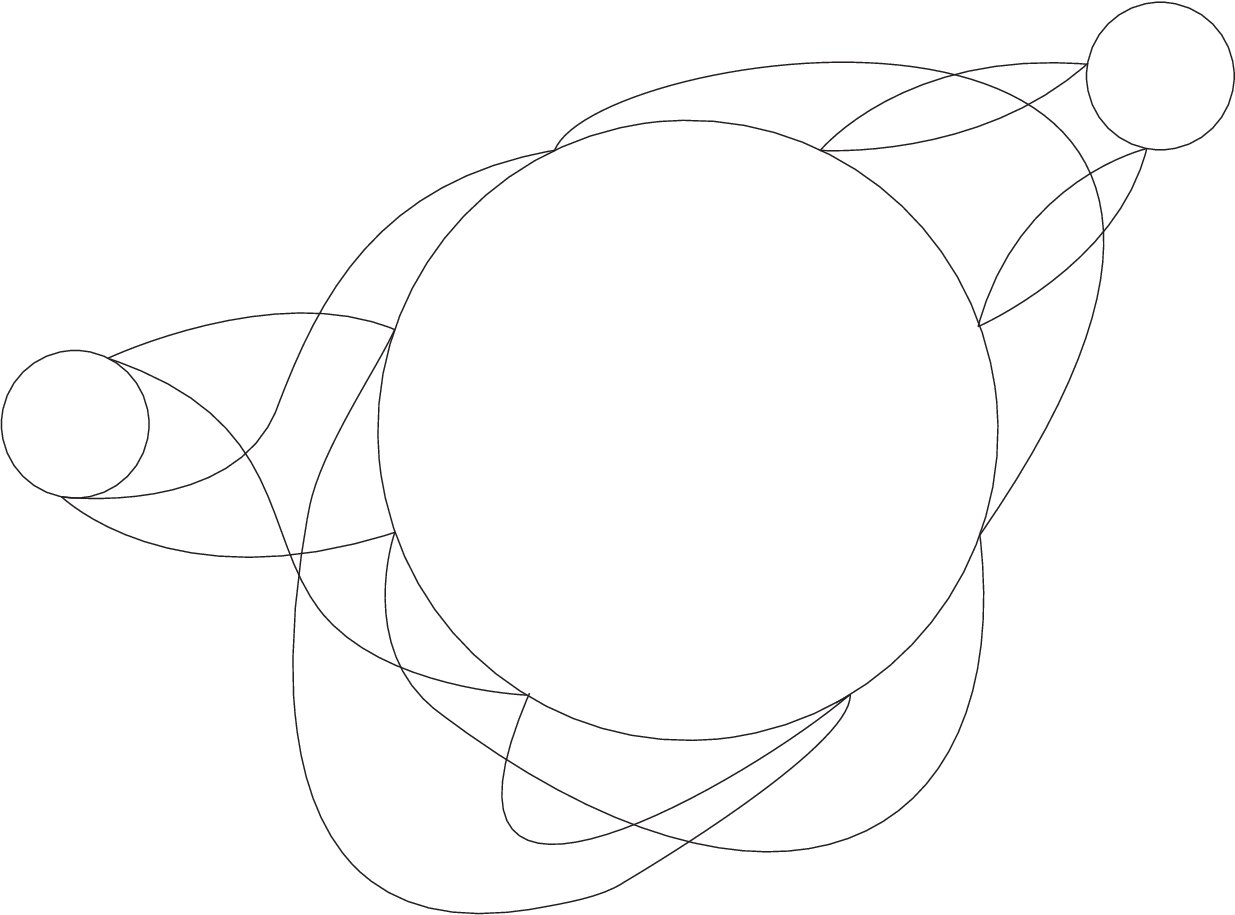}
    \caption{Partial dual $H_0^{\{A,B,C,D\}}$}
    \label{fig:eulerian_ABCD_dual}
  \end{subfigure}
\put(-393,136){\footnotesize$1^-$}
\put(-382,136){\footnotesize$1^+$}
\put(-376,125){\footnotesize$2^-$}
\put(-376,115){\footnotesize$2^+$}
\put(-383,107){\footnotesize$3^-$}
\put(-394,107){\footnotesize$3^+$}
\put(-401,116){\footnotesize$4^-$}
\put(-401,125){\footnotesize$4^+$}
\put(-310,136){\footnotesize$5^-$}
\put(-297,136){\footnotesize$5^+$}
\put(-292,126){\footnotesize$6^-$}
\put(-292,115){\footnotesize$6^+$}
\put(-297,107){\footnotesize$7^-$}
\put(-310,107){\footnotesize$7^+$}
\put(-319,116){\footnotesize$8^-$}
\put(-319,126){\footnotesize$8^+$}
\put(-378,34){\tiny$9^-$}
\put(-387,34){\tiny$9^+$}
\put(-378,50){\tiny$10^-$}
\put(-378,42){\tiny$10^+$}
\put(-319,42){\tiny$11^-$}
\put(-319,50){\tiny$11^+$}
\put(-303,35){\tiny$12^-$}
\put(-317,35){\tiny$12^+$}
\put(-100,104){\footnotesize1}
\put(-119,84){\footnotesize7}
\put(-120,51){\footnotesize3}
\put(-103,35){\footnotesize5}
\put(-64,34){\footnotesize6}
\put(-175,62){\footnotesize4}
\put(-47,56){\footnotesize2}
\put(-52,86){\footnotesize9}
\put(-70,105){\footnotesize11}
\put(-170,74){\footnotesize8}
\put(-10,117){\footnotesize10}
\put(-22,124){\footnotesize12}
  \caption{Eulerian case $E'=\{A,B,C,D\}$}
  \label{fig:eulerian_ABCD}
\end{figure}

\clearpage

\subsection{All bipartite partial duals: medial maps and partial dual diagrams}

The same hypermap $H_0$ admits exactly two bipartite partial duals: $E'=\varnothing$ (the original hypermap) and $E'=\{C,D\}$. Below are the medial maps with all-crossing directions $\phi_0,\phi_1$, together with the corresponding bipartite partial duals.

\begin{figure}[h]
  \centering
  \begin{subfigure}[t]{0.351\textwidth}
    \centering
    \includegraphics[width=\textwidth]{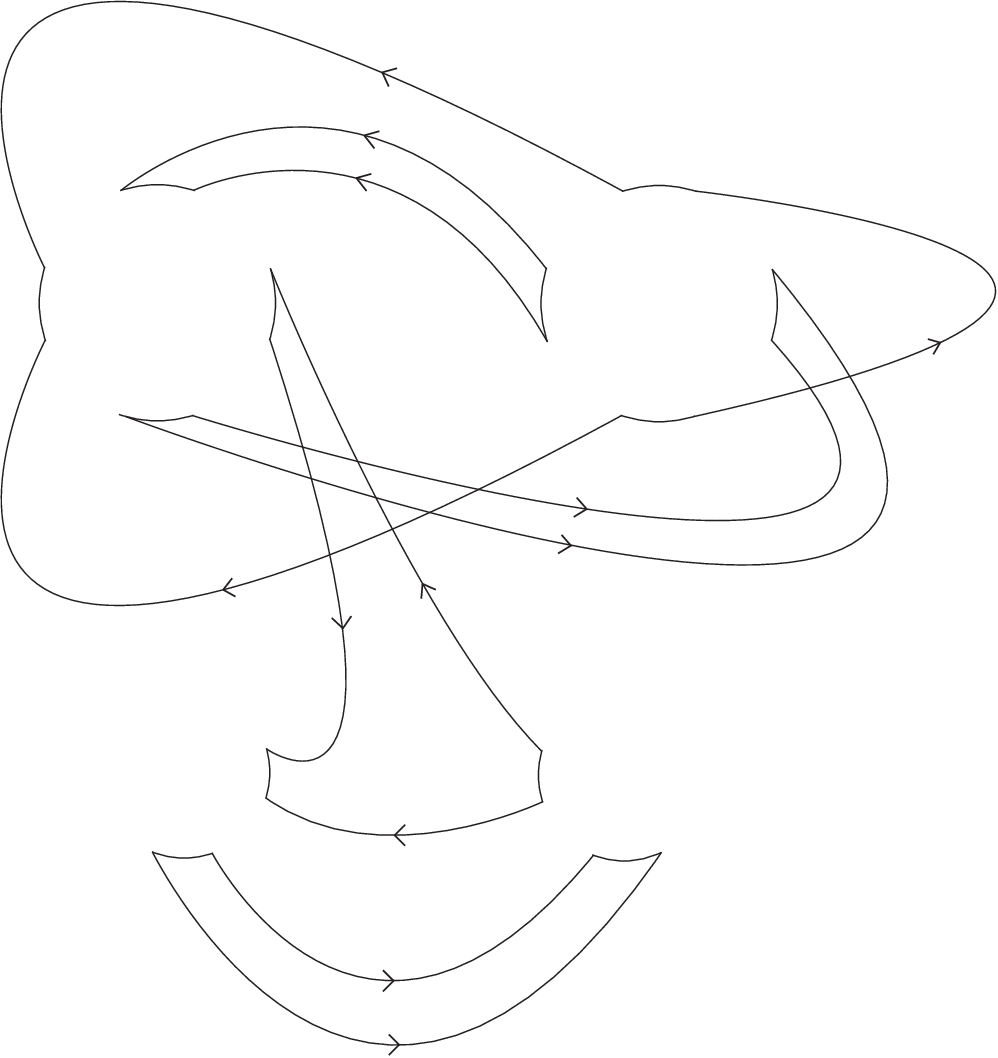}
    \caption{Medial map with all-crossing direction $\Phi_0$ ($C(\Phi_0)=\varnothing$)}
    \label{fig:bipartite_empty_medial}
  \end{subfigure}
  \hspace{2cm}
  \begin{subfigure}[t]{0.39\textwidth}
    \centering
    \includegraphics[width=\textwidth]{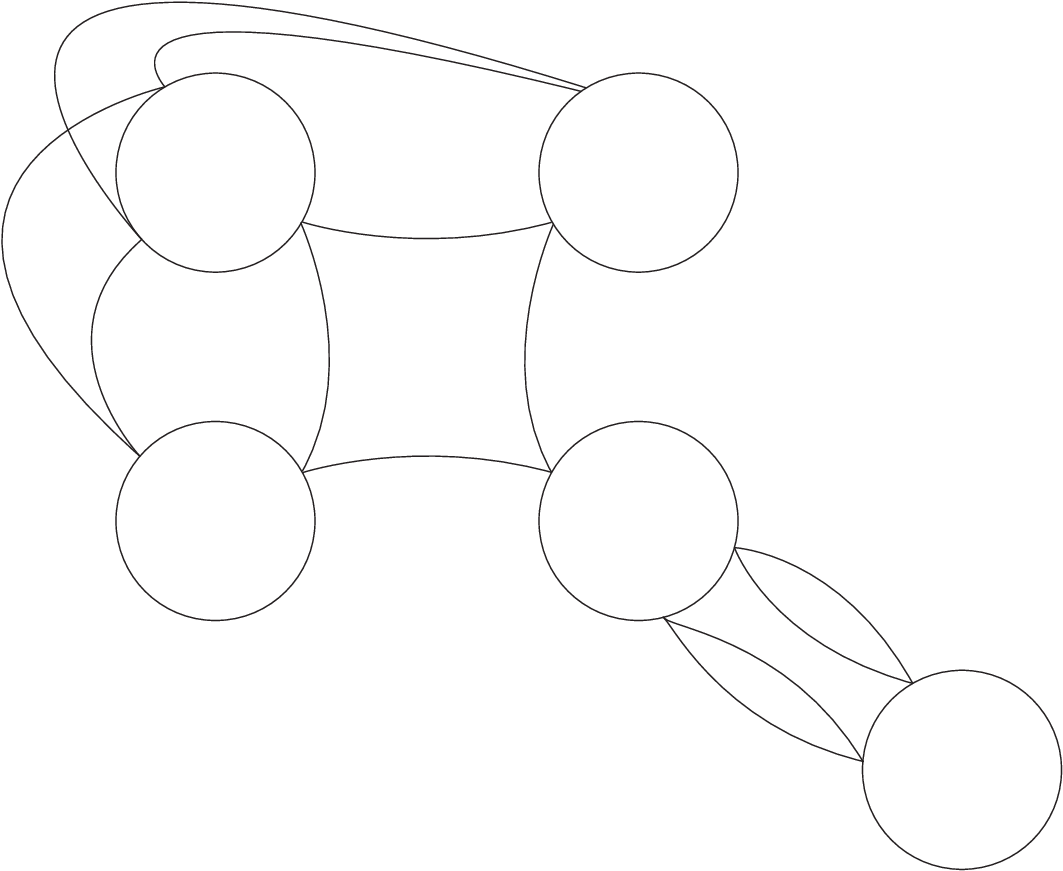}
    \caption{Partial dual $H_0^{\varnothing}=H_0$}
    \label{fig:bipartite_empty_dual}
  \end{subfigure}
\put(-395,136){\footnotesize$1^-$}
\put(-384,136){\footnotesize$1^+$}
\put(-376,125){\footnotesize$2^-$}
\put(-376,115){\footnotesize$2^+$}
\put(-383,107){\footnotesize$3^-$}
\put(-394,107){\footnotesize$3^+$}
\put(-401,116){\footnotesize$4^-$}
\put(-401,125){\footnotesize$4^+$}
\put(-310,136){\footnotesize$5^-$}
\put(-297,136){\footnotesize$5^+$}
\put(-292,126){\footnotesize$6^-$}
\put(-292,115){\footnotesize$6^+$}
\put(-297,107){\footnotesize$7^-$}
\put(-310,107){\footnotesize$7^+$}
\put(-319,116){\footnotesize$8^-$}
\put(-319,126){\footnotesize$8^+$}
\put(-378,34){\tiny$9^-$}
\put(-387,34){\tiny$9^+$}
\put(-380,50){\tiny$10^-$}
\put(-380,42){\tiny$10^+$}
\put(-319,42){\tiny$11^-$}
\put(-319,50){\tiny$11^+$}
\put(-303,35){\tiny$12^-$}
\put(-317,35){\tiny$12^+$}
\put(-85,110){\footnotesize1}
\put(-83,125){\footnotesize8}
\put(-140,110){\footnotesize4}
\put(-156,127){\footnotesize5}
\put(-156,105){\footnotesize7}
\put(-86,62){\footnotesize2}
\put(-71,52){\footnotesize11}
\put(-80,45){\footnotesize10}
\put(-140,62){\footnotesize3}
\put(-160,62){\footnotesize6}
\put(-30,22){\footnotesize12}
\put(-32,10){\footnotesize9}
  \caption{Bipartite case $E'=\varnothing$}
  \label{fig:bipartite_empty}
\end{figure}

\begin{figure}[h]
  \centering
  \begin{subfigure}[t]{0.351\textwidth}
    \centering
    \includegraphics[width=\textwidth]{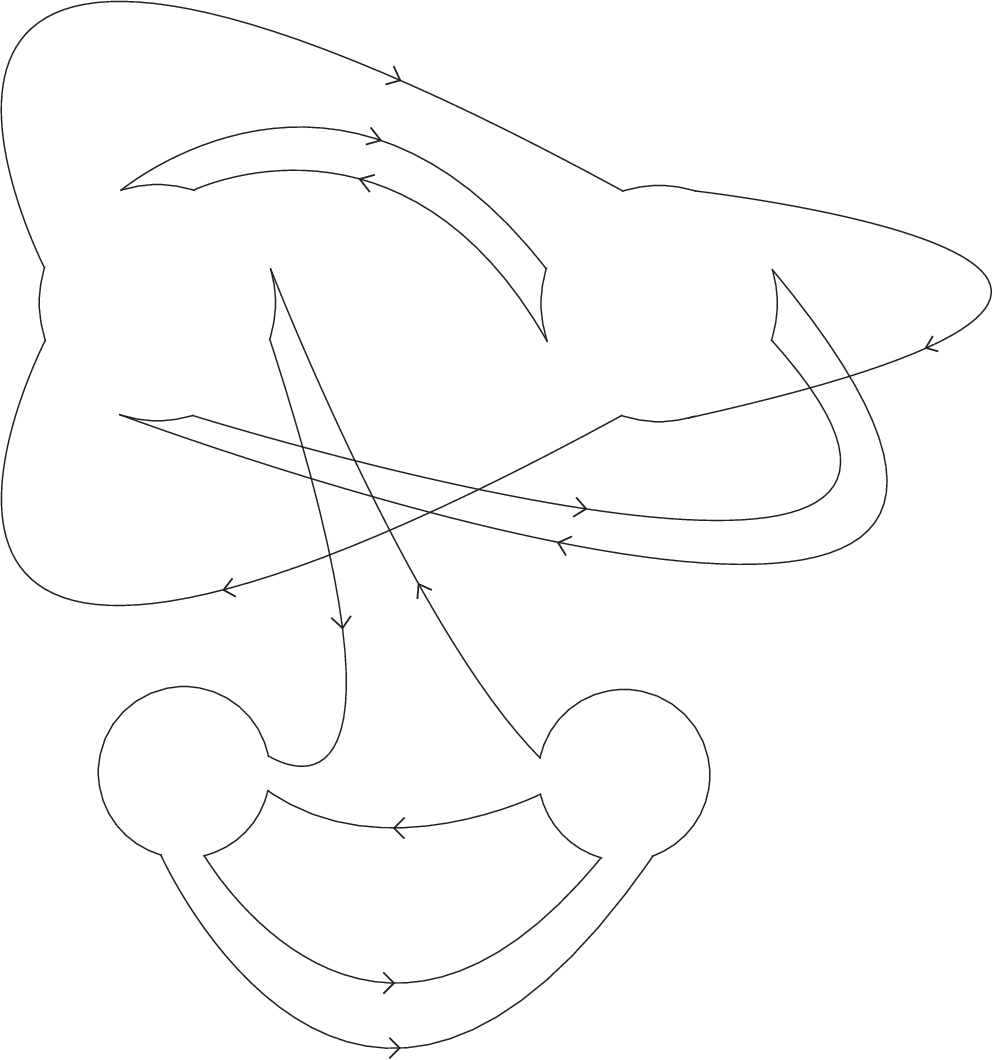}
    \caption{Medial map with all-crossing direction $\Phi_1$ ($C(\Phi_1)=\{C,D\}$)}
    \label{fig:bipartite_CD_medial}
  \end{subfigure}
  \hspace{2cm}
  \begin{subfigure}[t]{0.39\textwidth}
    \centering
    \includegraphics[width=\textwidth]{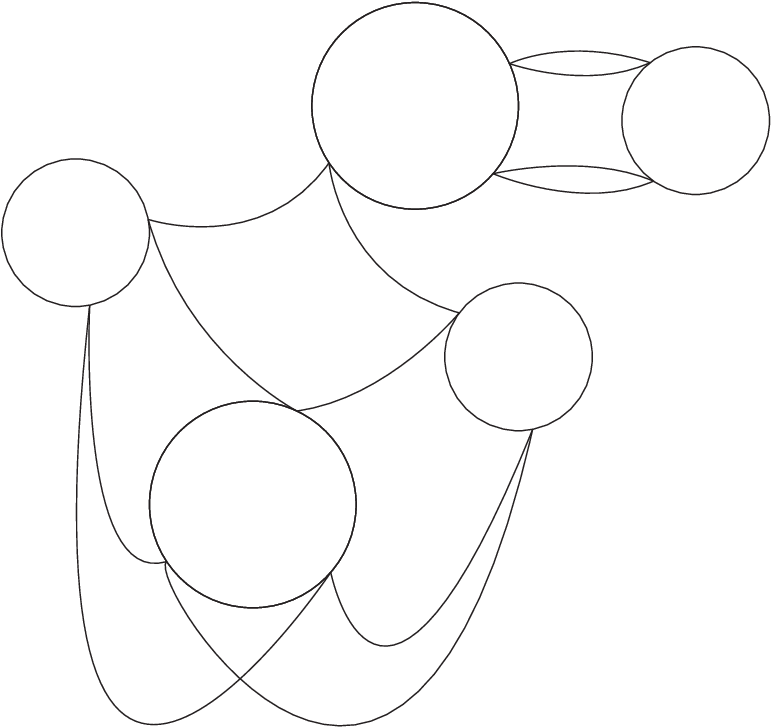}
    \caption{Partial dual $H_0^{\{C,D\}}$}
    \label{fig:bipartite_CD_dual}
  \end{subfigure}
\put(-395,136){\footnotesize$1^-$}
\put(-384,136){\footnotesize$1^+$}
\put(-376,125){\footnotesize$2^-$}
\put(-376,115){\footnotesize$2^+$}
\put(-383,107){\footnotesize$3^-$}
\put(-394,107){\footnotesize$3^+$}
\put(-401,116){\footnotesize$4^-$}
\put(-401,125){\footnotesize$4^+$}
\put(-310,136){\footnotesize$5^-$}
\put(-297,136){\footnotesize$5^+$}
\put(-292,126){\footnotesize$6^-$}
\put(-292,115){\footnotesize$6^+$}
\put(-297,107){\footnotesize$7^-$}
\put(-310,107){\footnotesize$7^+$}
\put(-319,116){\footnotesize$8^-$}
\put(-319,126){\footnotesize$8^+$}
\put(-378,34){\tiny$9^-$}
\put(-387,34){\tiny$9^+$}
\put(-380,50){\tiny$10^-$}
\put(-380,42){\tiny$10^+$}
\put(-319,42){\tiny$11^-$}
\put(-319,50){\tiny$11^+$}
\put(-303,35){\tiny$12^-$}
\put(-317,35){\tiny$12^+$}
\put(-155,113){\footnotesize1}
\put(-162,105){\footnotesize8}
\put(-120,65){\footnotesize4}
\put(-142,42){\footnotesize5}
\put(-110,35){\footnotesize7}
\put(-105,133){\footnotesize2}
\put(-77,152){\footnotesize11}
\put(-30,130){\footnotesize10}
\put(-65,95){\footnotesize3}
\put(-60,72){\footnotesize6}
\put(-30,152){\footnotesize12}
\put(-77,130){\footnotesize9}
  \caption{Bipartite case $E'=\{C,D\}$}
  \label{fig:bipartite_CD}
\end{figure}

\end{document}